\theoremstyle{plain}
\newtheorem{Thm}{Theorem}[section]
\newtheorem{Lem}[Thm]{Lemma}
\newtheorem{Crlr}[Thm]{Corollary}
\newtheorem{Prop}[Thm]{Proposition}
\newtheorem{Rem}[Thm]{Remark} %added later\newtheorem{Main}{Main Theorem}
\theoremstyle{definition}
\newtheorem{Def}[Thm]{Definition}
\theoremstyle{remark}
\def\finf{\mathop{{\rm I}\kern -.27 em {\rm F}}\nolimits}
\begin{document}

\title{On Metric Dimension of Functigraphs}

\author{{\bf Linda Eroh$^1$}, {\bf Cong X. Kang$^2$}, and {\bf Eunjeong Yi$^3$}\\
\small$^1$ University of Wisconsin Oshkosh, Oshkosh, WI 54901, USA\\
\small $^{2,3}$Texas A\&M University at Galveston, Galveston, TX 77553, USA\\
$^1${\small\em eroh@uwosh.edu}; $^2${\small\em kangc@tamug.edu}; $^3${\small\em yie@tamug.edu}}

\maketitle

\date{}

\begin{abstract}
The \emph{metric dimension} of a graph $G$, denoted by $\dim(G)$, is the minimum number of vertices such that each vertex is uniquely determined by its distances to the chosen vertices. Let $G_1$ and $G_2$ be disjoint copies of a graph $G$ and let $f: V(G_1) \rightarrow V(G_2)$ be a function. Then a \emph{functigraph} $C(G, f)=(V, E)$ has the vertex set $V=V(G_1) \cup V(G_2)$ and the edge set $E=E(G_1) \cup E(G_2) \cup \{uv \mid v=f(u)\}$. We study how metric dimension behaves in passing from $G$ to $C(G,f)$ by first showing that $2 \le \dim(C(G, f)) \le 2n-3$, if $G$ is a connected graph of order $n \ge 3$ and $f$ is any function. We further investigate the metric dimension of functigraphs on complete graphs and on cycles.
\end{abstract}

\noindent\small {\bf{Key Words:}} distance, resolving set, metric dimension, functigraph, complete graph, cycle

\vspace{.05in}

\small {\bf{2000 Mathematics Subject Classification:}} 05C12, 05C38\\

%%%%%%%%%%%%%%%%%%%%%%%%%%%%%%%%%%%%%%%%%%%%%%%%%%
%%%%%%%%%%%%%%%%%%%%%%%%%%%%%%%%%%%%%%%%%%%%%%%%%%

\section{Introduction}

Let $G = (V(G),E(G))$ be a simple, undirected, connected, and nontrivial graph with order $|V(G)|$. The \textit{degree} of a vertex $v$ in $G$, denoted by $\deg_G(v)$, is the number of edges that are incident to $v$ in $G$; an \emph{end-vertex} is a vertex of degree one, and a \emph{support vertex} is a vertex that is adjacent to an end-vertex. For a vertex $v \in V(G)$, the \emph{open neighborhood of $v$} is the set $N_G(v)=\{u \mid uv \in E(G)\}$, and the \emph{closed neighborhood of $v$} is the set $N_G[v]=N_G(v) \cup \{v\}$. For $S \subseteq V(G)$, the \emph{open neighborhood of $S$} is the set $N_G(S)=\cup_{v \in S} N_G(v)$ and the \emph{closed neighborhood of $S$} is the set $N_G[S]=N_G(S) \cup S$; throughout the paper, we denote by $N(S)$ ($N[S]$, respectively) the open (closed, respectively) neighborhood of $S$ in $C(G, f)$. We denote by $K_n$, $C_n$, and $P_n$ the complete graph, the cycle, and the path on $n$ vertices, respectively. The \emph{distance} between two vertices $v, w \in V(G)$ is denoted by $d_G(v,w)$; throughout the paper, we denote by $d(v,w)$ the distance between $v$ and $w$ in $C(G,f)$. For an ordered set $S=\{u_1, u_2, \ldots, u_k\} \subseteq V(G)$ of distinct vertices, the \emph{metric code} (or \emph{code}, in short) of $v \in V(G)$ with respect to $S$ is the $k$-vector $code_S(v)=(d_G(v, u_1), d_G(v, u_2), \ldots, d_G(v, u_k))$. A vertex $x \in V(G)$ \emph{resolves} a pair of vertices $v,w \in V(G)$ if $d_G(v,x) \neq d_G(w,x)$. A set of vertices $S \subseteq V(G)$ \emph{resolves} $G$ if every pair of distinct vertices of $G$ are resolved by some vertex in $S$ or, equivalently, if $code_S(u) \neq code_S(v)$ for distinct vertices $u$ and $v$ of $G$; then $S$ is called a \emph{resolving set} of $G$. The \emph{metric dimension} of $G$, denoted by $\dim(G)$, is the minimum of $|S|$ as $S$ varies over all resolving sets of $G$. For other terminologies in graph theory, we refer to \cite{CZ}.\\

Slater  \cite{Slater, Slater2} introduced the concept of a resolving set for a connected graph under the term \emph{locating set}; he referred to a minimum resolving set as a \emph{reference set}, and the cardinality of a minimum resolving set as the \emph{location number} of a graph. Independently, Harary and Melter \cite{HM} studied these concepts under the term \emph{metric dimension}. Metric dimension as a graph parameter has numerous applications, among them are robot navigation \cite{landmarks}, sonar \cite{Slater}, combinatorial optimization \cite{MathZ}, and pharmaceutical chemistry \cite{CEJO}. It was noted in \cite{NPcompleteness} that determining the metric dimension of a graph is an NP-hard problem. Metric dimension has been heavily studied; for surveys, see \cite{base} and \cite{survey}. For more articles on metric dimension in graphs, see \cite{ref1}, \cite{cartesian}, \cite{CEJO}, \cite{CPZ}, \cite{EKY}, \cite{ref2}, \cite{ref3}, \cite{landmarks}, \cite{PoZh}, and \cite{wheels}.\\

Chartrand and Harary \cite{permutation} introduced a ``permutation graph" (or ``generalized prism"). Hedetniemi \cite{H} introduced a ``function graph", which comprises two graphs (not necessarily identical copies) with a function relation between them. Independently, D\"{o}rfler \cite{WD} introduced a ``mapping graph", which consists of two disjoint identical copies of a graph and additional edges between the two vertex sets specified by a function. The ``mapping graph" was rediscovered and studied in \cite{functi}, where it was called a ``functigraph". We recall the definition of the functigraph.

\begin{Def}
Let $G_1$ and $G_2$ be disjoint copies of a graph $G$, and let $f: V(G_1) \rightarrow V(G_2)$ be a function. A \emph{functigraph} $C(G, f)=(V, E)$ consists of the vertex set $V=V(G_1) \cup V(G_2)$ and the edge set $E(G)=E(G_1) \cup E(G_2) \cup \{uv \mid v=f(u)\}$.
\end{Def}

In this paper, we study the metric dimension of functigraphs. For a connected graph $G$ of order $n\ge 3$ and for a function $f$, we show that $2 \le \dim(C(G,f)) \le 2n-3$. We provide in Remark \ref{remark} an example showing that $\dim(G)-\dim(C(G,f))$ can be arbitrarily large; we also show in Theorem \ref{thmcycle_constant} that $\dim(C(C_n,f))$ can be arbitrarily large for a constant function $f$, though $\dim(C_n)=2$. These examples are quite surprising; they indicate the complexity, vis-\`{a}-vis metric dimension, present in passing from $G$ to $C(G,f)$. Further, we give the metric dimension of functigraphs on complete graphs and we also give bounds for the metric dimension of functigraphs on cycles. It is worth noting that the metric dimension of the wheel graph, which is a subgraph of $C(C_n, f)$ for a constant function $f$, has been studied in \cite{wheel2} and \cite{wheels}.
 
%%%%%%%%%%%%%%%%%%%%%%%%%%%%%%%%%%%%%%%%%%%%%%%%%%%
%%%%%%%%%%%%%%%%%%%%%%%%%%%%%%%%%%%%%%%%%%%%%%%%%%%

\section{Bounds on Metric Dimension of Functigraphs}

We first recall some basic facts on metric dimension for background.

\begin{Thm} \cite{CEJO} \label{dimbounds}
For a connected graph $G$ of order $n \ge  2$ and diameter $d$, $$f (n, d) \le \dim(G) \le n-d,$$
where $f (n, d)$ is the least positive integer $k$ for which $k + d^k \ge n$.
\end{Thm}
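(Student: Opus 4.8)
The plan is to prove the two inequalities separately, since the upper bound is a short construction while the lower bound is a counting estimate on the number of possible codes. Throughout I write $d = \mathrm{diam}(G)$, and I recall the one fact I will lean on repeatedly: any vertex $w$ belonging to a resolving set is automatically distinguished from every other vertex, because the coordinate of $w$ in its own code is $0$ whereas every $x \neq w$ has $d_G(x,w) \ge 1$ in that coordinate.

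For the upper bound $\dim(G) \le n-d$, I would exhibit an explicit resolving set of size $n-d$. Fix vertices $u,v$ with $d_G(u,v)=d$ and let $u=x_0, x_1, \ldots, x_d=v$ be a shortest $u$--$v$ path. Put $W = V(G)\setminus\{x_1,\ldots,x_d\}$, so that $|W| = n-d$ and $u=x_0 \in W$. By the fact above, every pair containing a member of $W$ is resolved by that member, so the only pairs that could fail are those lying inside the removed set $\{x_1,\ldots,x_d\}$. But since these vertices lie on a shortest path out of $u$, we have $d_G(u,x_i)=i$, so the single vertex $u \in W$ already assigns them pairwise distinct coordinates and resolves them all. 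Hence $W$ is a resolving set and $\dim(G) \le n-d$.

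For the lower bound $\dim(G) \ge f(n,d)$, I would count the codes realized by vertices outside a resolving set. Let $S=\{s_1,\ldots,s_k\}$ be any resolving set, with $k=|S|$. Every vertex $w \notin S$ satisfies $1 \le d_G(w,s_i) \le d$ for each $i$ (the lower bound because $w \neq s_i$, the upper because $d$ is the diameter), so the code of $w$ lies in $\{1,2,\ldots,d\}^k$. Since $S$ resolves $G$, the $n-k$ vertices outside $S$ receive pairwise distinct codes from this set of size $d^k$, giving $n-k \le d^k$, that is, $k + d^k \ge n$. Because $g(k):=k+d^k$ satisfies $g(k+1)-g(k) = 1 + d^k(d-1) \ge 1 > 0$ (using $d \ge 1$, which holds as $n \ge 2$), the function $g$ is strictly increasing; therefore any positive integer $k$ with $k+d^k \ge n$ is at least the least such integer $f(n,d)$. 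Applying this to a minimum resolving set yields $\dim(G) \ge f(n,d)$.

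The two arguments are short, and I do not expect a genuine obstacle. The one point needing care is the final deduction in the lower bound: the inequality $\dim(G) + d^{\dim(G)} \ge n$ only forces $\dim(G) \ge f(n,d)$ once we know $g(k)=k+d^k$ is monotone, so I would be sure to record the strict increase of $g$ explicitly rather than treat it as obvious. It is also worth verifying the degenerate regime ($d \ge 1$ whenever $n \ge 2$) so that each coordinate genuinely ranges over at least one value and the bound $n-k \le d^k$ is not vacuous.
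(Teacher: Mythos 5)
Your proof is correct, but note that the paper offers no proof of this statement to compare against: it is quoted verbatim from the reference \cite{CEJO}, and your two arguments are precisely the standard ones from that source --- deleting the interior vertices of a diametral shortest path (and using the endpoint $u$, whose distances $d_G(u,x_i)=i$ are pairwise distinct, to separate the deleted vertices) for the upper bound, and observing that vertices outside a resolving set $S$ of size $k$ have codes in $\{1,\dots,d\}^k$, whence $n-k\le d^k$, for the lower bound. One small simplification: the monotonicity of $g(k)=k+d^k$, which you flag as the delicate point, is actually not needed at all. By definition $f(n,d)$ is the \emph{least} positive integer $k$ satisfying $k+d^k\ge n$; since you have shown that $\dim(G)$ is a positive integer satisfying this inequality, the bound $f(n,d)\le\dim(G)$ follows immediately from minimality, with no appeal to how $g$ behaves on other values of $k$.
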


A generalization of Theorem \ref{dimbounds} has been given in \cite{EJC} by Hernando et al.

\begin{Thm} \cite{EJC}
Let $G$ be a graph of order $n$, diameter $d \ge 2$, and metric dimension $k$. Then
$$n \le \left(\left\lfloor \frac{2d}{3}\right\rfloor+1\right)^k+k\sum_{i=1}^{\lceil \frac{d}{3} \rceil} (2i-1)^{k-1}.$$
\end{Thm}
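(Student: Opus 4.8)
The plan is to fix a minimum resolving set $W = \{w_1, \ldots, w_k\}$, so that the codes $code_W(v) = (d_G(v,w_1), \ldots, d_G(v,w_k))$ are pairwise distinct and each coordinate lies in $\{0, 1, \ldots, d\}$. Then $n$ is at most the number of codes actually realized, and the whole argument reduces to an efficient count of these codes. The organizing idea is to use the threshold $\lceil d/3\rceil$: I would partition $V(G)$ into those vertices all of whose coordinates are at least $\lceil d/3\rceil$ (the ``far'' class), and those having at least one coordinate smaller than $\lceil d/3\rceil$, i.e.\ lying close to some landmark (the ``near'' class).

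For the far class, every coordinate of every code takes a value in $\{\lceil d/3\rceil, \lceil d/3\rceil+1, \ldots, d\}$, a set of $d - \lceil d/3\rceil + 1 = \lfloor 2d/3\rfloor + 1$ integers (using $\lceil d/3\rceil + \lfloor 2d/3\rfloor = d$). Since distinct vertices have distinct codes, this class contributes at most $(\lfloor 2d/3\rfloor + 1)^k$ vertices, which is precisely the first term of the bound.

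For the near class I would count, for each landmark $w_j$ and each $i$ with $0 \le i \le \lceil d/3\rceil - 1$, the vertices in the sphere $V_i^{(j)} = \{u : d_G(u,w_j) = i\}$; every near-class vertex lies in at least one such sphere, so summing over $j$ and $i$ over-counts but is still a valid upper bound. The key local estimate is that any two $u, v \in V_i^{(j)}$ satisfy $d_G(u,v) \le d_G(u,w_j) + d_G(w_j,v) = 2i$, so for each other landmark $w_\ell$ the values $d_G(\cdot, w_\ell)$ on $V_i^{(j)}$ occupy an interval of at most $2i+1$ integers; since $u$ and $v$ share the $j$-th coordinate they must differ in one of the remaining $k-1$, forcing $|V_i^{(j)}| \le (2i+1)^{k-1}$. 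Summing yields $k\sum_{i=0}^{\lceil d/3\rceil-1}(2i+1)^{k-1}$, and the substitution $i \mapsto i+1$ rewrites this as $k\sum_{i=1}^{\lceil d/3\rceil}(2i-1)^{k-1}$, the second term. Adding the two class bounds gives the claim.

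I expect the difficulty to be bookkeeping rather than conceptual. The main thing to get exactly right is the threshold $\lceil d/3\rceil$, which is what makes the two terms line up: one must verify $2i \le d$ throughout the near class (true since $i \le \lceil d/3\rceil - 1$ forces $2i \le 2\lceil d/3\rceil - 2 \le d$) so that $2i+1$, and not $d+1$, is the correct count of coordinate values in each sphere. A secondary point is to argue cleanly that the over-counting in the near class is harmless and that the chosen spheres genuinely cover every near-landmark vertex.
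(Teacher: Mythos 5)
Your proof is correct, and it takes essentially the same approach as the source: the paper under review states this theorem without proof, citing \cite{EJC}, and your argument — partitioning $V(G)$ at the threshold $\lceil d/3\rceil$ into vertices far from all landmarks (at most $(\lfloor 2d/3\rfloor+1)^k$ of them, since each coordinate then lies in an interval of $d-\lceil d/3\rceil+1=\lfloor 2d/3\rfloor+1$ values) and vertices on a small sphere about some landmark (at most $(2i+1)^{k-1}$ per sphere, via the triangle-inequality bound $|d_G(u,w_\ell)-d_G(v,w_\ell)|\le 2i$) — is precisely the decomposition used by Hernando et al. The details check out, including the reindexing $i\mapsto i+1$ that produces the $(2i-1)^{k-1}$ terms; your worry about needing $2i\le d$ is actually unnecessary, since the interval bound $2i+1$ on the number of attainable coordinate values holds regardless of how $2i$ compares to $d$.
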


\begin{Thm} \cite{CEJO} \label{dimthm}
Let $G$ be a connected graph of order $n \ge 2$. Then
\begin{itemize}
\item[(a)] $\dim(G)=1$ if and only if $G=P_n$,
\item[(b)] $\dim(G)=n-1$ if and only if $G=K_n$,
\item[(c)] for $n \ge 4$, $\dim(G)=n-2$ if and only if $G=K_{s,t}$ ($s,t \ge 1$), $G=K_s + \overline{K}_t$ ($s \ge 1, t \ge 2$), or $G=K_s + (K_1 \cup K_t)$ ($s, t \ge 1$); here, $A+B$ denotes the graph obtained from the disjoint union of graphs $A$ and $B$ by joining every vertex of $A$ with every vertex of $B$, and $\overline{C}$ denotes the complement of a graph $C$.
\end{itemize}
\end{Thm}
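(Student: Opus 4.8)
The plan is to organize all three parts around a single reformulation: a vertex $w \notin \{u,v\}$ fails to resolve the pair $\{u,v\}$ exactly when $d(u,w)=d(v,w)$, and this holds for \emph{every} such $w$ precisely when $u$ and $v$ are \emph{twins}, i.e.\ $N(u)\setminus\{v\}=N(v)\setminus\{u\}$. Consequently $V\setminus\{u,v\}$ is a resolving set if and only if $u,v$ are not twins, and more generally $V\setminus T$ is resolving if and only if every pair inside $T$ is resolved by some vertex outside $T$. This dictionary converts each claim into a statement about twins and neighborhoods.

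For part (a), the ``if'' direction is immediate: writing $P_n$ as $v_1v_2\cdots v_n$, the endpoint $v_1$ gives the distinct distances $d(v_i,v_1)=i-1$, so $\dim(P_n)=1$. For ``only if'', if $\{u\}$ resolves $G$ then the $n$ distances $d(\cdot,u)$ are distinct and lie in $\{0,1,\dots,\mathrm{ecc}(u)\}$, forcing $\mathrm{ecc}(u)=n-1$ with exactly one vertex $v_i$ at each distance $i$; any edge $v_iv_j$ with $j\ge i+2$ would give $d(u,v_j)\le i+1<j$, so only consecutive $v_i$ are joined and $G=P_n$. For part (b), in $K_n$ no third vertex resolves any pair (all distances are $1$), so every resolving set omits at most one vertex and $\dim(K_n)=n-1$. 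Conversely, if $\dim(G)=n-1$ then by the reformulation every pair is a twin pair; I would then force diameter $1$ by contradiction: if $d(u,v)=2$ with common neighbor $x$, then the adjacent pair $\{u,x\}$ and the third vertex $v$ violate twinness, since $d(u,v)=2\ne 1=d(x,v)$. Hence $G=K_n$.

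Part (c) is where the real work lies. First, since $\dim(G)=n-2<n-1$ we have $G\ne K_n$, so $\mathrm{diam}(G)\ge 2$; and Theorem \ref{dimbounds} gives $\dim(G)\le n-\mathrm{diam}(G)$, so $\mathrm{diam}(G)\le 2$. Thus I may assume $\mathrm{diam}(G)=2$, where every nonzero code entry is $1$ or $2$ and ``$w$ resolves $\{u,v\}$'' simply means $w$ is adjacent to exactly one of $u,v$. I would then partition $V$ into twin classes; each class induces a clique (true twins) or an independent set (false twins), and, using connectedness together with the twin property, the adjacency between any two classes is either complete or empty. A set $W$ in which every pair is resolved from outside $W$ (call it \emph{good}) contains at most one vertex of each class, so with $c$ classes one gets $\dim(G)\ge n-c$; since $\dim(G)=n-2$ this forces $c\ge 2$.

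The heart of the argument is to determine, for each configuration of class-types and class-joins, the largest good set, and to read off that $\dim(G)=n-2$ happens exactly for the three listed families. When $c=2$ the classes are completely joined (else $G$ is disconnected) and not both cliques (that case collapses to $K_n$), giving $G=K_{s,t}$ or $G=K_s+\overline{K}_t$ with $t\ge 2$. When $c=3$, I expect the decisive observation to be that a cross-pair $\{u,v\}$ drawn from two classes is resolved only by vertices of the third class, so to block \emph{every} $3$-element good set one class must be a single vertex acting as the sole resolver of such a pair, which pins the graph to $K_s+(K_1\cup K_t)$. Finally I must show $c\ge 4$ always admits a good $3$-set, yielding $\dim(G)\le n-3$ and hence exclusion. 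The ``if'' direction is then the reverse bookkeeping: each listed graph is non-complete, so $\dim\le n-2$, while the class count gives $\dim\ge n-2$ outright for $K_{s,t}$ and $K_s+\overline{K}_t$, and for $K_s+(K_1\cup K_t)$ the singleton-class obstruction forbids any good $3$-set. I expect the main obstacle to be precisely this configuration analysis for $c=3$ and the exclusion of $c\ge 4$: the bound $\dim\ge n-c$ is \emph{not} tight when $c=3$, and capturing exactly when three classes still fail to produce a size-$(n-3)$ resolving set --- the singleton phenomenon that manufactures the $K_1$ summand --- is the delicate point of the whole theorem.
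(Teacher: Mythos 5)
A preliminary remark: the paper never proves this theorem --- it is imported as background from \cite{CEJO} --- so your proposal can only be judged on its own merits, not against an in-paper argument. On those merits, parts (a) and (b) are complete and correct: the twin dictionary (for all $w\notin\{u,v\}$, $d(u,w)=d(v,w)$ holds iff $N(u)\setminus\{v\}=N(v)\setminus\{u\}$), the reconstruction of $P_n$ from the distance levels of a resolving vertex, and the common-neighbor contradiction showing that ``every pair is a twin pair'' forces $G=K_n$ are all sound. Your framework for (c) is also the right one: diameter $2$ via Theorem \ref{dimbounds}, the partition into twin classes (each a clique or an independent set, with complete or empty joins between classes), the identification of complements of resolving sets with ``good'' sets, and the bound $\dim(G)\ge n-c$ are all correct.

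The genuine gap is in the execution of (c), exactly where you yourself locate ``the real work.'' The two steps that carry all the content --- classifying the $c=3$ configurations and excluding $c\ge 4$ --- are only announced (``I expect\dots'', ``I must show\dots''), never carried out, so what you have is a plan rather than a proof. Worse, the ``decisive observation'' you expect to settle $c=3$ is false as stated: a cross-pair from two classes need not be resolved only by the third class. In $K_s+(K_t\cup \overline{K}_r)$ with $t,r\ge 2$, which has three twin classes, the pair $\{b,c\}$ with $b$ in the clique class $K_t$ and $c$ in the independent class $\overline{K}_r$ is resolved by a second vertex $b'$ of $K_t$ itself, since $d(b',b)=1\neq 2=d(b',c)$; indeed this graph admits a good $3$-set $\{a,b,c\}$ (one vertex per class) and so has dimension at most $n-3$. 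Even in the target graph $K_s+(K_1\cup K_t)$, the pair consisting of a $K_t$-vertex and the $K_1$-vertex is resolved inside $K_t$; the only pair resolved solely by the third class is the one drawn from the two mutually joined clique classes $K_s$ and $K_t$. So the ``singleton phenomenon'' is a feature of one specific configuration of class types and joins, not a general principle, and establishing that it occurs exactly for $K_s+(K_1\cup K_t)$ --- and that no configuration with $c\ge 4$ classes blocks every good $3$-set --- requires the full enumeration that is absent from your write-up. As it stands, neither direction of (c) for the family $K_s+(K_1\cup K_t)$, nor the exclusion of all graphs outside the three families, is actually proved.
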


Next, we obtain general bounds for the metric dimension of functigraphs. If $G$ is a connected graph of order 2, then $G \cong P_2$ and $\dim(C(P_2,f))=2$ for any function $f$. So, we only consider a connected graph $G$ of order $n \ge 3$ for the rest of the paper.

\begin{Thm}\label{functibounds}
Let $G$ be a connected graph of order $n \ge 3$, and let $f: V(G_1) \rightarrow V(G_2)$ be a function. Then $2 \le \dim(C(G, f)) \le 2n-3$. Both bounds are sharp.
\end{Thm}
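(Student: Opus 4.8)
The plan is to prove the two inequalities separately, and to establish the upper bound by showing that one can always delete three vertices from $C(G,f)$ and still resolve it. For the lower bound, first note that $C(G,f)$ is connected: $G_1$ and $G_2$ are connected and the $n\ge 3$ edges $uf(u)$ join the two copies. Since $G$ is connected of order $n\ge 3$, it is not a disjoint union of isolated vertices and edges, so some vertex $u$ has $\deg_G(u)\ge 2$; the corresponding vertex of $G_1$ then has degree $\deg_{G_1}(u)+1\ge 3$ in $C(G,f)$. Hence $C(G,f)$ has a vertex of degree at least $3$ and is not a path, so by Theorem~\ref{dimthm}(a) we have $\dim(C(G,f))\neq 1$; as $\dim\ge 1$ for any connected graph of order $\ge 2$, this gives $\dim(C(G,f))\ge 2$.

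For the upper bound I would use the standard reduction: if $S=V\setminus T$, then every vertex of $S$ is distinguished from all others by the zero entry in its own coordinate, so $S$ resolves $C(G,f)$ as soon as it resolves every pair of vertices contained in $T$. Taking $|T|=3$, it therefore suffices to produce three vertices $T=\{a,b,c\}$ whose three internal pairs are each resolved by a vertex of $S$; this yields $\dim(C(G,f))\le 2n-3$. Two elementary facts will do the resolving: a vertex adjacent to exactly one of $x,y$ (and equal to neither) resolves $\{x,y\}$; and if $u\in f^{-1}(w)$ then $u\in V(G_1)$ has $w$ as its only neighbour in $G_2$, so $d(u,w)=1$ while $d(u,x)\ge 2$ for every $x\in V(G_2)\setminus\{w\}$, i.e.\ $u$ resolves every pair $\{w,x\}$ with $x\in V(G_2)\setminus\{w\}$.

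When $f$ is not constant, i.e.\ $|f(V(G_1))|\ge 2$, the bound is immediate: choose distinct $a,b\in f(V(G_1))$ and any third vertex $c\in V(G_2)$ (available since $n\ge 3$), and put $T=\{a,b,c\}$. Any preimage $u_a\in f^{-1}(a)$ resolves both $\{a,b\}$ and $\{a,c\}$, any $u_b\in f^{-1}(b)$ resolves $\{b,c\}$, and all three resolvers lie in $V(G_1)\subseteq S$. The genuine obstacle is the constant case $f\equiv v_0$, where $f^{-1}$ is useful only at $v_0$. Here a short computation gives $d(u,x)=1+d_G(v_0,x)$ for all $u\in V(G_1)$ and $x\in V(G_2)$, so $G_1$-vertices cannot separate any two vertices of $G_2$ (and, dually, $G_2$-vertices cannot separate two $G_1$-vertices); this is what makes the bound delicate. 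I would split into sub-cases. If $v_0$ is not adjacent to all other vertices of $G$, pick $r,s\in V(G_2)\setminus\{v_0\}$ with $d_G(v_0,r)\neq d_G(v_0,s)$ and set $T=\{v_0,r,s\}$, so that every $G_1$-vertex resolves all three pairs. If $v_0$ is adjacent to all others but $G\neq K_n$, take a non-twin pair $p,q\in V(G_1)$ and set $T=\{v_0,p,q\}$: a $G_1$-vertex adjacent to exactly one of $p,q$ resolves $\{p,q\}$, while any $x\in V(G_2)\setminus\{v_0\}$ satisfies $d(x,v_0)=1\neq 2=d(x,p)=d(x,q)$ and so resolves both $\{v_0,p\}$ and $\{v_0,q\}$.

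The remaining case $G=K_n$ with $f$ constant I would settle by direct computation, and it is also where the upper bound is attained: pairs inside $V(G_1)$ and pairs inside $V(G_2)\setminus\{v_0\}$ turn out to be resolved only by themselves, forcing $\dim\ge 2n-3$, while an explicit set of size $2n-3$ resolves, so $\dim(C(K_n,\mathrm{const}))=2n-3$ and the upper bound is sharp. For the lower bound, the ladder $C(P_n,\mathrm{id})$ has metric dimension $2$, so that bound is sharp as well. I expect the constant-function analysis, and in particular keeping track of which side can resolve which pairs once $f^{-1}$ concentrates at $v_0$, to be the main difficulty; everything else reduces to the two resolving facts above.
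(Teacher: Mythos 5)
Your proof is correct, but it follows a genuinely different route from the paper's. The paper disposes of both bounds by citation: $C(G,f)$ contains a cycle, hence is not a path, so $\dim(C(G,f))\ge 2$ by Theorem~\ref{dimthm}(a) (your degree-three argument witnesses the same non-path fact); and since $C(G,f)\not\cong K_{2n}$ and no functigraph is among the graphs $K_{s,t}$, $K_s+\overline{K}_t$, $K_s+(K_1\cup K_t)$ of Theorem~\ref{dimthm}(c), the classification of graphs of order $N$ with dimension $N-1$ or $N-2$ gives $\dim(C(G,f))\le 2n-3$, with sharpness delegated to Theorem~\ref{constant}. You instead prove the upper bound constructively: delete a well-chosen triple $T$ and check that $V\setminus T$ resolves the three pairs inside $T$, with cases according to whether $f$ is constant, whether the image vertex $v_0$ is universal, and whether $G=K_n$. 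Your route is longer but more self-contained -- it needs only part (a) of Theorem~\ref{dimthm}, not the nontrivial classification in part (c) (whose applicability to every functigraph the paper asserts without verification), it produces explicit resolving sets of size $2n-3$, and it essentially re-derives Theorem~\ref{constant} along the way; the paper's route buys brevity. Two small repairs to your write-up: first, the sentence that ``$G_1$-vertices cannot separate any two vertices of $G_2$'' in the constant case is overstated -- since $d(u,x)=1+d_G(v_0,x)$ is independent of $u\in V(G_1)$, such vertices fail to separate only those pairs of $G_2$-vertices equidistant from $v_0$, which is precisely what your choice $d_G(v_0,r)\neq d_G(v_0,s)$ exploits, so the slip is in wording, not in logic; second, you should record why a connected $G\neq K_n$ contains a non-twin pair (two vertices at distance $2$ yield an adjacent non-twin pair via the middle vertex of a geodesic) and verify the asserted $(2n-3)$-element resolving set for $C(K_n,f)$ with $f$ constant (the paper's $S=\{u_1,\dots,u_{n-1},v_2,\dots,v_{n-1}\}$ works), both of which are routine.
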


\begin{proof}
Since $C(G, f)$ contains a cycle, $\dim(C(G, f)) \ge 2$ by (a) of Theorem \ref{dimthm}. On the other hand, noting that $C(G, f) \not\cong K_{2n}$ for any function $f$, $\dim(C(G, f)) \le 2n-2$ by (b) of Theorem \ref{dimthm}; further, no $C(G, f)$ satisfies $\dim(C(G, f))=2n-2$ by (c) of Theorem \ref{dimthm}, and thus $\dim(C(G, f)) \le 2n-3$. For the sharpness of the lower bound, take $G=P_n$ and $f \equiv id$, the identity function; then $\dim(C(P_n, id))=2$, since two end-vertices of $G_1 \cong P_n$ form a minimum resolving set for $C(P_n, id)$. For the sharpness of the upper bound, $G=K_n$, with $f$ a constant function, is an example -- this will be shown in Theorem \ref{constant}. 
\end{proof}

The following definitions are stated in~\cite{CEJO}. Fix a graph $G$. A vertex of degree at least three is called a \emph{major vertex}. An end-vertex $u$ is called \emph{a terminal vertex of a major vertex} $v$ if $d_G(u, v)<d_G(u, w)$ for every other major vertex $w$. The \emph{terminal degree} of a major vertex $v$ is the number of terminal vertices of $v$. A major vertex $v$ is an \emph{exterior major vertex} if it has positive terminal degree. Let $\sigma(G)$ denote the sum of terminal degrees of all major vertices of $G$, and let $ex(G)$ denote the number of exterior major vertices of $G$. Next, we recall the concept of \emph{twin vertices}. Two vertices $u, v \in V(G)$ are called \emph{twins} if $N_G(u) - \{v\}=N_G(v) - \{u\}$; notice that for any set $S$ with $S \cap \{u,v\} = \emptyset$, $code_S(u)=code_S(v)$.\\

\begin{Thm}\cite{CEJO, landmarks, PoZh} \label{tree}
If $T$ is a tree that is not a path, then $\dim(T)=\sigma(T)-ex(T)$.
\end{Thm}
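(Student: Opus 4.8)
The plan is to prove the two inequalities $\dim(T) \ge \sigma(T) - ex(T)$ and $\dim(T) \le \sigma(T) - ex(T)$ separately, organizing everything around the \emph{terminal paths} (``legs'') of the exterior major vertices. First I would record the structural facts I need: for each exterior major vertex $v$, the path in $T$ from $v$ to any of its terminal vertices has all of its internal vertices of degree $2$, and terminal paths belonging to distinct exterior major vertices are vertex-disjoint (this follows from the definition of a terminal vertex, since along such a path no other major vertex is closer). I would also use the standard tree fact that for distinct $p,q$ a vertex $x$ does not resolve $p,q$ precisely when $d_T(p,q)$ is even, say $2r$, and the unique path from $x$ to the $p$--$q$ path first meets it at the midpoint $m$ (equivalently $d_T(x,p)=d_T(x,q)$).

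For the lower bound, fix an exterior major vertex $v$ of terminal degree $t_v$ and suppose two of its legs, $P_i$ and $P_j$, contain no vertex of a resolving set $W$. Letting $a \in P_i$ and $b \in P_j$ be the vertices at distance $1$ from $v$, every vertex of $W$ lies outside $P_i \cup P_j$, so its shortest path to $a$ and to $b$ passes through $v$; hence $d_T(x,a) = d_T(x,v)+1 = d_T(x,b)$ for all $x \in W$, and $a,b$ are unresolved. Therefore $W$ must meet at least $t_v-1$ of the legs of $v$. Because legs of distinct exterior major vertices are disjoint, summing over all exterior major vertices gives $|W| \ge \sum_v (t_v - 1) = \sigma(T) - ex(T)$.

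For the upper bound I would exhibit a resolving set of exactly this size: for each exterior major vertex $v$, put into $S$ the terminal (end-)vertices of all but one of its $t_v$ legs, so that $|S| = \sum_v(t_v-1) = \sigma(T)-ex(T)$. To see that $S$ resolves $T$, suppose $p \ne q$ are unresolved. Then $d_T(p,q)$ is even with midpoint $m$, and every landmark attaches to the $p$--$q$ path at $m$; in particular the two branches of $T$ issuing from $m$ toward $p$ and toward $q$ contain no landmark of $S$. Continuing each branch to a leaf produces end-vertices $a$ (on the $p$-side) and $b$ (on the $q$-side), and I would argue that their nearest exterior major vertices coincide in a single vertex $v$ at which these two legs fork, so that $S$ fails to place a landmark on two distinct legs of $v$, contradicting the construction.

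The main obstacle is this last step of the upper bound: making rigorous that the two landmark-free branches from $m$ terminate in legs of one and the same exterior major vertex, and that $m$ sits symmetrically on these two legs. This requires a careful analysis of where $m$ falls relative to $v$ and its legs --- handling the cases in which $m$ is the major vertex itself, lies in the interior of a leg, or lies on the ``core'' of $T$ between major vertices --- and using the equidistance of $p$ and $q$ from every landmark to force the symmetry. By contrast, the structural preliminaries and the lower bound are routine once the ``equidistant-vertices-on-sibling-legs'' observation is in place.
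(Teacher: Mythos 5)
The paper itself offers no proof of this theorem: it is quoted as a known result from \cite{CEJO}, \cite{landmarks}, and \cite{PoZh}, so your attempt can only be measured against the classical arguments in those references. Your structural preliminaries (internal vertices of legs have degree $2$; legs of distinct exterior major vertices are vertex-disjoint; the midpoint criterion for non-resolution in a tree) are all correct, and your lower bound is complete and is essentially the standard one: if two legs of an exterior major vertex $v$ avoid the resolving set $W$, the two neighbors of $v$ on those legs get identical codes, so $W$ meets all but at most one leg of each exterior major vertex, and disjointness of legs gives $|W| \ge \sum_{v}(t_v-1)=\sigma(T)-ex(T)$.

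The upper bound, however, contains a genuine gap --- precisely the one you flag yourself. The claim that the two landmark-free branches at the midpoint $m$ terminate in legs of one and the same exterior major vertex is never proved; ``I would argue'' followed by the admission that making it rigorous is ``the main obstacle'' leaves the decisive step missing, so the proposal as written does not establish $\dim(T)\le \sigma(T)-ex(T)$. The claim is true, and it can be filled more cleanly than by the case analysis over the position of $m$ that you sketch. First show that a landmark-free branch $B$ of $T-m$ contains \emph{no major vertex at all}: otherwise pick a major vertex $w\in B$ at maximum distance from $m$; every component of $T-w$ not containing $m$ then contains no major vertex, hence is a pendant path ending in a leaf of $T$ whose nearest major vertex is $w$; since $\deg_T(w)\ge 3$, at least two such components exist, so $w$ is an exterior major vertex with two terminal vertices inside $B$, neither in $S$ --- contradicting that $S$ omits only one terminal vertex of $w$. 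Consequently $B_p$ and $B_q$ are pendant paths hanging from $m$, and their end leaves $a$ and $b$ have $m$ itself as nearest major vertex. If $\deg_T(m)\ge 3$, then $a\ne b$ are two terminal vertices of the exterior major vertex $m$ with $\{a,b\}\cap S=\emptyset$, again a contradiction; if $\deg_T(m)=2$, then $T=B_p\cup\{m\}\cup B_q$ is a path, contrary to hypothesis. Note also that the ``symmetry of $m$ on the two legs'' you ask for is not needed: two landmark-free legs at a single exterior major vertex already contradict the construction of $S$.
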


\begin{Rem}\label{remark}
The graph in Figure \ref{md-counter} shows that $\dim(G)-\dim(C(G,f))$ can be arbitrarily large. Let $f_i$ denote the restriction of $f$ on $V(G_1^i)$, where $1 \le i \le n$. Notice $\dim(G_1^i)=\sigma(G_1^i)-ex(G_1^i)=6-1=5$. But $\dim(C(G_i^i, f_i)) \le 4$, since the solid vertices in $C(G_i^i, f_i)$ form a resolving set for $C(G_1^i, f_i)$, as one can explicitly check. Also, notice that, for any $u, v \in V(C(G_1^i, f_i))$, the distance between $u$ and $v$ in $C(G_1, f)$ is not less than their distance in $C(G_1^i, f_i)$; i.e., the solid vertices of $C(G_1^i, f_i)$ still distinguish the vertices of $C(G_1^i, f_i)$ in $C(G_1, f)$. Thus, $\dim(C(G_1, f)) \le n \cdot \dim(C(G_1^i, f_i)) \le 4n$. But, $\dim(G_1)=n \cdot \dim(G_1^i)=5n$. Therefore, $\dim(G_1) - \dim(C(G_1, f)) \ge n$.
\end{Rem}

\begin{figure}[htpb]
\begin{center}
\scalebox{0.4}{\input{FM.pstex_t}} \caption{An example showing that $\dim(G)-\dim(C(G,f))$ can be arbitrarily large}\label{md-counter}
\end{center}
\end{figure}

%%%%%%%%%%%%%%%%%%%%%%%%%%%%%%%%%%%%%%%%%%%%%%%%%%
%%%%%%%%%%%%%%%%%%%%%%%%%%%%%%%%%%%%%%%%%%%%%%%%%%

\section{Metric Dimension of Functigraphs on Complete Graphs}

In this section, for $n \ge 3$, we show that 1) $\dim(C(K_n, \sigma))=n-1$ for a permutation $\sigma$; 2) $\dim(C(K_n, f))=2n-2-|f(V(G_1))|$ for a non-permutation function $f$. Throughout this section, we let $V(G_1)=\{u_i \mid 1 \le i \le n\}$ and $V(G_2)=\{v_i \mid 1 \le i \le n\}$ for $G_1 \cong G_2 \cong K_n$, where $n \ge 3$. Noting that each vertex in a resolving set $S$ has a distinct code from any other vertex in $C(G,f)$, one only needs to check the codes of vertices in $V(C(G, f)) - S$.\\

We recall the following theorem, which can be viewed as a functigraph where the function is the identity on a connected graph $G$.

\begin{Thm}\cite{CEJO}\label{id}
For every connected graph $G$, $\dim(G) \le \dim(G \square K_2) \le \dim(G)+1$, where $A \square B$ denotes the Cartesian product of two graphs $A$ and $B$.
\end{Thm}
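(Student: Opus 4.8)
The plan is to prove the two inequalities separately, relying throughout on the distance formula for the Cartesian product: writing each vertex of $G \square K_2$ as a pair $(u,i)$ with $u \in V(G)$ and $i \in \{1,2\}$, one has $d_{G \square K_2}((u,i),(w,j)) = d_G(u,w) + |i-j|$. Thus distances within a single copy agree with distances in $G$, while crossing between the two copies adds exactly $1$. (This is precisely the functigraph $C(G,\mathrm{id})$, two disjoint copies $G\times\{1\}$ and $G\times\{2\}$ joined by a perfect matching.)

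For the lower bound $\dim(G) \le \dim(G \square K_2)$, I would start with a minimum resolving set $W$ of $G \square K_2$ and push it down to $G$ via the projection $\pi(u,i)=u$, setting $W'=\pi(W)$ so that $|W'| \le |W|$. To see that $W'$ resolves $G$, suppose $u \ne w$ satisfy $d_G(u,s)=d_G(w,s)$ for every $s \in W'$. Then for each landmark $(s,i) \in W$ the distance formula gives $d_{G\square K_2}((u,1),(s,i)) = d_G(u,s)+(i-1) = d_G(w,s)+(i-1) = d_{G\square K_2}((w,1),(s,i))$, so $(u,1)$ and $(w,1)$ would be unresolved by $W$, a contradiction. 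Hence $W'$ resolves $G$ and $\dim(G) \le |W'| \le |W| = \dim(G\square K_2)$.

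For the upper bound $\dim(G \square K_2) \le \dim(G)+1$, I would take a minimum resolving set $S$ of $G$ and first lift it to $S_1 = S \times \{1\}$. Writing $c(v)=(d_G(v,s))_{s \in S}$ for the (pairwise distinct) $G$-codes, two vertices lying in the same copy receive distinct $S_1$-codes immediately, and a cross-copy pair $(u,1),(w,2)$ shares the same $S_1$-code exactly when $c(u)=c(w)+\mathbf{1}$, where $\mathbf{1}$ is the all-ones vector. The key observation is that every such remaining confusion is destroyed by adjoining one extra landmark $(s_0,2)$ with $s_0 \in S$: if $c(u)=c(w)+\mathbf{1}$ then $d_G(u,s_0)=d_G(w,s_0)+1$, so $d_{G\square K_2}((u,1),(s_0,2)) = d_G(u,s_0)+1 = d_G(w,s_0)+2$ while $d_{G\square K_2}((w,2),(s_0,2)) = d_G(w,s_0)$, a difference of $2$. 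Since adjoining a landmark never merges codes, $S_1 \cup \{(s_0,2)\}$ resolves $G \square K_2$, giving $\dim(G \square K_2) \le |S|+1 = \dim(G)+1$.

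I expect the main obstacle to be the upper bound, specifically pinning down that the \emph{only} pairs left unresolved by the lifted set $S_1$ are the cross-copy pairs with $c(u)=c(w)+\mathbf{1}$, and then verifying that a single well-chosen second-copy landmark simultaneously separates all of them. The clean structural point is that all offending pairs have their two $S$-codes shifted by the constant vector $\mathbf{1}$; a landmark placed at any $s_0 \in S$ in the second copy detects this because, in the $s_0$-coordinate, it reverses the effect of the $+1$ shift, producing a gap of $2$ rather than $0$.
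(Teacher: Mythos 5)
Your proof is correct. There is, however, no in-paper argument to compare it against: the paper states this result purely as a recalled theorem, citing \cite{CEJO}, and gives no proof. Your two-part argument is the standard one and is sound in every step: for the lower bound, projecting a minimum resolving set of $G \square K_2$ onto $G$ and using the product distance formula $d((u,i),(s,j))=d_G(u,s)+|i-j|$; for the upper bound, lifting a basis $S$ of $G$ to $S_1=S\times\{1\}$, correctly identifying the only pairs unresolved by $S_1$ as cross-copy pairs $(u,1),(w,2)$ with $c(u)=c(w)+\mathbf{1}$, and noting that a single landmark $(s_0,2)$ with $s_0\in S$ separates every such pair simultaneously (producing a gap of $2$ in the $s_0$-coordinate), while adjoining a landmark can never merge previously distinct codes.
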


\begin{Thm}\cite{cartesian} \label{siamdm}
For every graph $G$ and for all $n \ge 2 \dim(G)+1$, $\dim(K_n \square G)=n-1$.
\end{Thm}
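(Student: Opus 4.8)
The plan is to establish the two inequalities $\dim(K_n \square G) \ge n-1$ (valid for every $n$) and $\dim(K_n \square G) \le n-1$ (valid under the hypothesis $n \ge 2\dim(G)+1$), and conclude equality. Throughout I write a vertex of $K_n \square G$ as an ordered pair $(i,g)$ with $i \in V(K_n)=\{1,\dots,n\}$ and $g \in V(G)$, and I use the standard Cartesian-product distance $d((i,g),(j,h)) = d_{K_n}(i,j)+d_G(g,h)$, where $d_{K_n}(i,j)=1$ if $i \neq j$ and $0$ otherwise. The observation driving everything is that, for a fixed landmark $(j,h)$, the distance $d((i,g),(j,h)) = d_G(g,h) + [i \neq j]$ depends on the first coordinate $i$ only through whether $i=j$, so changing layers shifts the code entry by at most $1$.

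For the lower bound, let $S$ be any resolving set and let $J=\{j : (j,h)\in S \text{ for some } h\}$ be the set of first coordinates it uses. If $|J|\le n-2$, choose $a,b \notin J$ and any $g \in V(G)$; then every landmark $(j,h)\in S$ has $j\neq a$ and $j\neq b$, so $d((a,g),(j,h)) = 1+d_G(g,h) = d((b,g),(j,h))$, meaning $(a,g)$ and $(b,g)$ share a code. This contradicts that $S$ resolves, so $|J|\ge n-1$ and hence $|S|\ge n-1$; note this step uses nothing about $n$ being large.

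For the upper bound I construct a resolving set of size $n-1$. Write $k=\dim(G)$ and fix a resolving set $W=\{w_1,\dots,w_k\}$ of $G$. Since $n\ge 2k+1$ gives $n-1\ge 2k$, I place one landmark in each of the layers $1,\dots,n-1$ (leaving layer $n$ empty), choosing the $G$-coordinates $h_j\in W$ so that \emph{every} $w_\ell$ occurs in at least two distinct layers, e.g. set $h_\ell=h_{k+\ell}=w_\ell$ for $1\le \ell\le k$ and fill any leftover layers with $w_1$. Put $S=\{(j,h_j):1\le j\le n-1\}$, which has size $n-1$. If $(i,g),(i',g')$ have equal codes, then $d_G(g,h_j)+[i\neq j]=d_G(g',h_j)+[i'\neq j]$ for all $j\le n-1$. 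When $i=i'$ this forces $d_G(g,h_j)=d_G(g',h_j)$ for all $j$, and since $W\subseteq\{h_j\}$ resolves $G$ we get $g=g'$; when one index equals the empty layer $n$, the equalities agree off a single layer, so a full copy of $W$ survives and again forces $g=g'$, contradicting the residual $\pm1$ discrepancy.

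The main obstacle is the remaining case $i\neq i'$ with both in $\{1,\dots,n-1\}$, where the codes agree on every layer $j\notin\{i,i'\}$ while yielding $d_G(g,h_i)=d_G(g',h_i)+1$ and $d_G(g',h_{i'})=d_G(g,h_{i'})+1$. Since $W$ resolves and $g\neq g'$, some $w_\ell$ has $d_G(g,w_\ell)\neq d_G(g',w_\ell)$. If any copy of $w_\ell$ lies in a layer $j\notin\{i,i'\}$, the agreement there forces $d_G(g,w_\ell)=d_G(g',w_\ell)$, a contradiction; otherwise all (at least two) copies of $w_\ell$ lie in the two layers $i,i'$, so $h_i=h_{i'}=w_\ell$, and the two discrepancy equations become $d_G(g,w_\ell)=d_G(g',w_\ell)+1$ and $d_G(g',w_\ell)=d_G(g,w_\ell)+1$, whose sum gives $0=2$. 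This ``two landmarks at the same $G$-vertex produce opposite $\pm1$ shifts'' step is the crux: it is exactly what allows two copies of each $w_\ell$, hence the hypothesis $n\ge 2k+1$, to suffice, rather than the weaker $n\ge 3k+1$ that a naive ``delete two layers and still resolve'' argument would require. Combining the two bounds gives $\dim(K_n\square G)=n-1$.
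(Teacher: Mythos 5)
Your proof is correct, but note that there is nothing in the paper to compare it against: the paper states this theorem as a quotation from the cited reference on Cartesian products of graphs and gives no proof of it. Your argument is a valid, self-contained reconstruction, and it follows the same general strategy as the original source: the lower bound comes from observing that a resolving set of $K_n \square G$ can leave at most one $K_n$-layer empty (two empty layers $a,b$ give the unresolved pair $(a,g)$, $(b,g)$), and the upper bound comes from spreading a metric basis $W$ of $G$ over the $n-1$ occupied layers with each $w_\ell$ duplicated in two distinct layers. Your key step --- that when every copy of the distinguishing landmark $w_\ell$ lies in the two layers $i, i'$ of the offending pair, the forced equations $d_G(g,w_\ell)=d_G(g',w_\ell)+1$ and $d_G(g',w_\ell)=d_G(g,w_\ell)+1$ sum to the absurdity $0=2$ --- is exactly what makes two copies per basis vertex (hence the hypothesis $n \ge 2\dim(G)+1$) suffice, rather than the three copies a cruder ``delete two layers and a full basis must survive'' argument would demand. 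One small repair: in the case $i \ne i'$ with both layers occupied, you assert $g \ne g'$ without justification; you should add the one-line observation that $g = g'$ is impossible there, since the equation $d_G(g,h_i) = d_G(g',h_i)+1$ already fails when $g = g'$. With that line inserted, the case analysis is complete and the proof stands.
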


As an immediate corollary of Theorem \ref{siamdm}, we have the following

\begin{Crlr}
Let $G=K_n$ be the complete graph of order $n \ge 3$, and let $\sigma: V(G_1) \rightarrow V(G_2)$ be a permutation. Then $\dim(C(K_n, \sigma))=n-1$.
\end{Crlr}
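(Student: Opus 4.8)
The plan is to recognize that the permutation functigraph $C(K_n,\sigma)$ is isomorphic to the Cartesian product $K_n \square K_2$, and then apply Theorem \ref{siamdm} directly. First I would establish the isomorphism: when $\sigma$ is a permutation, each vertex $u_i \in V(G_1)$ is joined to exactly one vertex $v_{\sigma(i)} \in V(G_2)$, and since $\sigma$ is a bijection, each $v_j$ receives exactly one edge from $G_1$. After relabeling the vertices of $G_2$ by the permutation $\sigma$ (i.e.\ identifying $v_{\sigma(i)}$ with the ``second copy'' of $u_i$), the cross edges become exactly the matching $\{u_i v_{\sigma(i)}\}$, which is precisely the set of $K_2$-fibers in $K_n \square K_2$. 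Since both $G_1$ and $G_2$ induce complete graphs $K_n$, the resulting graph is $K_n \square K_2$.

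The second step is simply to invoke Theorem \ref{siamdm} with $G = K_2$. This requires checking the hypothesis $n \ge 2\dim(K_2)+1$. Since $K_2 \cong P_2$, we have $\dim(K_2)=1$ by (a) of Theorem \ref{dimthm}, so the condition reads $n \ge 3$, which is exactly the hypothesis of the corollary. Theorem \ref{siamdm} then gives $\dim(K_n \square K_2)=n-1$, and by the isomorphism established in the first step, $\dim(C(K_n,\sigma))=n-1$.

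The only genuine obstacle is verifying the isomorphism $C(K_n,\sigma) \cong K_n \square K_2$ cleanly, and this is where I would be most careful. The subtlety is that in $K_n \square K_2$ the two fibers are connected by the ``canonical'' perfect matching $u_i \leftrightarrow v_i$, whereas in $C(K_n,\sigma)$ they are connected by the matching induced by $\sigma$. The key observation is that a relabeling of the second copy's vertices is a graph automorphism of $G_2 \cong K_n$ (since $K_n$ is vertex-transitive, indeed edge-complete, any permutation of its vertices is an automorphism), so composing the identity on $G_1$ with this relabeling on $G_2$ yields an isomorphism carrying $C(K_n,\sigma)$ onto $C(K_n, id) = K_n \square K_2$. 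This is routine but is the one place where the permutation hypothesis is essential—it is exactly what makes the cross edges a perfect matching and hence a Cartesian-product fiber structure.

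Since the corollary is presented as ``an immediate corollary'' of Theorem \ref{siamdm}, I would keep the writeup short: one sentence identifying $C(K_n,\sigma)$ with $K_n \square K_2$ via the permutation, one sentence confirming $2\dim(K_2)+1 = 3 \le n$, and one sentence concluding by Theorem \ref{siamdm}.
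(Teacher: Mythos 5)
Your proposal is correct and is exactly the paper's intended argument: the paper states this as an immediate corollary of Theorem \ref{siamdm}, relying implicitly on the isomorphism $C(K_n,\sigma) \cong K_n \square K_2$ (valid because any relabeling of the vertices of $K_n$ is an automorphism, so the matching given by $\sigma$ can be straightened to the canonical one) together with the hypothesis check $n \ge 2\dim(K_2)+1 = 3$. You have simply made explicit the details the paper leaves to the reader, and both steps are sound.
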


\begin{Thm}\label{constant}
Let $G=K_n$ be the complete graph of order $n \ge 3$, and let $f_0: V(G_1) \rightarrow V(G_2)$ be a constant function. Then $\dim(C(K_n, f_0))=2n-3$.
\end{Thm}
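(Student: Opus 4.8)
The plan is to normalize the function and then split the argument into a lower bound driven by twin vertices and a matching upper bound given by an explicit resolving set. Since $f_0$ is constant, I may assume without loss of generality that $f_0(u_i)=v_1$ for every $i$; thus $C(K_n,f_0)$ consists of the complete graph $G_1$ on $\{u_1,\dots,u_n\}$ and the complete graph $G_2$ on $\{v_1,\dots,v_n\}$, together with the $n$ edges $u_iv_1$. I would first record the distance function, since both bounds rest on it: every pair of distinct vertices is at distance $1$ except for the pairs $u_i,v_j$ with $j\ge 2$, which are at distance $2$, because $v_1$ is the unique gateway between the two copies and the only path from a $u_i$ to such a $v_j$ runs through it.

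For the lower bound I would exploit the twin structure. Any two vertices $u_i,u_k$ satisfy $N(u_i)-\{u_k\}=N(u_k)-\{u_i\}=(\{u_1,\dots,u_n\}\setminus\{u_i,u_k\})\cup\{v_1\}$, so $U=\{u_1,\dots,u_n\}$ is a set of $n$ mutually twin vertices; likewise $W=\{v_2,\dots,v_n\}$ is a set of $n-1$ mutually twin vertices, since each such $v_j$ is adjacent only to the other elements of $G_2$. Crucially, $v_1$ is \emph{not} a twin of any $v_j$, because $v_1$ is adjacent to all of $u_1,\dots,u_n$ whereas $v_j$ is adjacent to none of them; this is exactly what keeps the second twin class at size $n-1$ rather than $n$. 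By the twin property recalled before the statement (if $S\cap\{u,v\}=\emptyset$ for twins $u,v$, then $code_S(u)=code_S(v)$), any resolving set $S$ can omit at most one vertex of $U$ and at most one vertex of $W$. As $U$ and $W$ are disjoint, this forces $|S|\ge(|U|-1)+(|W|-1)=(n-1)+(n-2)=2n-3$.

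For the upper bound I would verify that the set $S=\{u_1,\dots,u_{n-1}\}\cup\{v_2,\dots,v_{n-1}\}$, which has exactly $2n-3$ vertices, resolves $C(K_n,f_0)$. Because each vertex of $S$ is the unique vertex at distance $0$ from itself, it suffices to check that the three vertices outside $S$, namely $u_n$, $v_1$, and $v_n$, receive pairwise distinct codes. Using the distances above, $code_S(u_n)$ equals $1$ on the $u$-coordinates and $2$ on the $v$-coordinates, $code_S(v_1)$ equals $1$ in every coordinate, and $code_S(v_n)$ equals $2$ on the $u$-coordinates and $1$ on the $v$-coordinates. These three codes are manifestly distinct, so $S$ is a resolving set and $\dim(C(K_n,f_0))\le 2n-3$. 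Combining the two bounds yields equality.

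The only step requiring genuine care is the identification of the twin classes, and in particular the observation that $v_1$ fails to be a twin of the remaining $v_j$: mishandling this would inflate the second class to size $n$ and produce the false bound $2n-2$. Everything else, namely the distance computation and the code check for the explicit set, is routine.
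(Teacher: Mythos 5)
Your proof is correct and takes essentially the same approach as the paper: the lower bound rests on exactly the same two twin classes $\{u_1,\dots,u_n\}$ and $\{v_2,\dots,v_n\}$, and your explicit set $\{u_1,\dots,u_{n-1},v_2,\dots,v_{n-1}\}$ is the very resolving set the paper names. The only cosmetic difference is that the paper obtains the upper bound by citing its general bound $\dim(C(G,f))\le 2n-3$ from Theorem~\ref{functibounds} and mentions the explicit set as an additional check, whereas you verify that set's codes directly, which is equally valid and slightly more self-contained.
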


\begin{proof}
Without loss of generality, let $f_0(u_i)=v_1$ for each $i$ ($1 \le i \le n$). Let $S$ be a minimum resolving set of $C(K_n, f_0)$. First, we will show that $|S| \ge 2n-3$ for $n \ge 3$. Since any two vertices in $G_1$ are twins, all but one of these $n$ vertices must be in $S$. Similarly, since any two vertices in $\{v_2, v_3, \ldots, v_n\}$ are twins, all but one of these ($n-1$) vertices must be in $S$. So, $|S| \ge (n-1)+(n-2)=2n-3$. By Theorem \ref{functibounds}, $|S| \le 2n-3$, and thus $\dim(C(K_n, f_0))=2n-3$. More explicitly, one can check that $S=\{u_1, u_2, \ldots, u_{n-1}, v_2, v_3, \ldots, v_{n-1}\}$ is a resolving set for $C(K_n, f_0)$. \hfill
\end{proof}
\begin{figure}[htpb]
\begin{center}
\scalebox{0.47}{\input{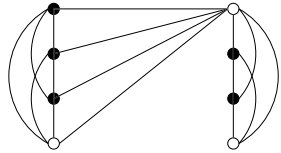}} \caption{$\dim(C(K_4, f_0))=5$ for a constant function $f_0$}\label{completeC}
\end{center}
\end{figure}

\begin{Rem}
Theorem \ref{constant} may be generalized as follows. For $m, n \ge 3$, let $H_1=K_m$ and $H_2=K_n$. Let $f_0:V(H_1) \rightarrow V(H_2)$ be a constant function. Let $\mathcal{G}=(V,E)$ be the graph with $V=V(H_1) \cup V(H_2)$ and $E=E(H_1) \cup E(H_2) \cup \{u_iv_1 \mid f_0(u_i)=v_1 \mbox{ for each $i$ } (1 \le i \le m) \}$. Then $\dim(\mathcal{G})=m+n-3$.
\end{Rem}

\begin{Thm}
Let $G=K_n$ be the complete graph of order $n \ge 3$, and let $|f(V(G_1))|=s$ where $1 < s < n$. Then $\dim(C(K_n, f)) = 2n-2-s$. 
\end{Thm}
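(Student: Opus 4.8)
The plan is to reduce everything to the distance structure of $C(K_n,f)$, which is very simple, and then treat the lower and upper bounds separately. Write $V(G_1)=\{u_1,\dots,u_n\}$ and $V(G_2)=\{v_1,\dots,v_n\}$, let $B=f(V(G_1))$ be the set of $s$ image vertices and $B'=V(G_2)\setminus B$ the $n-s$ non-image vertices, and for $w\in B$ call $f^{-1}(w)$ its \emph{fiber}. One checks at once that any two vertices lying both in $G_1$ or both in $G_2$ are at distance $1$, while $d(u_i,v_j)=1$ if $v_j=f(u_i)$ and $d(u_i,v_j)=2$ otherwise. From these distances I would record, for each pair, the set of vertices resolving it: two $u_i,u_j$ in a common fiber are twins (resolved only by $\{u_i,u_j\}$); two $u_i,u_j$ in distinct fibers with images $w,w'$ are resolved exactly by $\{u_i,u_j,w,w'\}$; two non-image vertices are twins; a pair of image vertices $w,w'$ is resolved (apart from by themselves) precisely by $f^{-1}(w)\cup f^{-1}(w')$; and an image--non-image pair $w,w''$ by $f^{-1}(w)$. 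As the excerpt notes, a set $S$ is resolving iff the vertices of $T:=V\setminus S$ receive pairwise distinct codes, so the whole question becomes: how large can $T$ be so that every pair inside $T$ still has a resolving vertex in $S$?

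For the lower bound I would bound $|T|$. The non-image vertices are mutually twins, so $|T\cap B'|\le 1$. For each image $w$ the fiber $f^{-1}(w)$ consists of mutual twins, so $T$ meets it in at most one vertex; set $x_w=1$ if $T\cap f^{-1}(w)\ne\emptyset$ and $y_w=1$ if $w\in T$. The crucial observation is that at most one fiber can have $x_w=y_w=1$: if two fibers $w\ne w'$ both did, choosing $u\in T\cap f^{-1}(w)$ and $u'\in T\cap f^{-1}(w')$ would put all of $\{u,u',w,w'\}$ --- the whole set of vertices resolving the different-fiber pair $u,u'$ --- inside $T$, leaving $u,u'$ unresolved. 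Hence $\sum_{w\in B}(x_w+y_w)\le 2+(s-1)=s+1$, i.e. $|T\cap(V(G_1)\cup B)|\le s+1$, and altogether $|T|\le s+2$. Thus $|S|\ge 2n-(s+2)=2n-2-s$ for every resolving set $S$, i.e. $\dim(C(K_n,f))\ge 2n-2-s$.

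For the upper bound I would exhibit $T$ of size $s+2$, equivalently a resolving set of size $2n-2-s$. Since $1<s<n$, the $s$ nonempty fibers sum to $n>s$, so some fiber has at least two vertices; fix $u^*$ in such a fiber and fix any $b\in B'$. Take $S=(V(G_1)\setminus\{u^*\})\cup(B'\setminus\{b\})$, so $|S|=(n-1)+(n-s-1)=2n-2-s$ and $T=\{u^*\}\cup B\cup\{b\}$. Every fiber still meets $S$ (only $u^*$ was deleted from $G_1$, from a fiber of size $\ge2$), so each pair in $T$ involving an image vertex is resolved by an element of the corresponding fiber lying in $S$; this disposes of all image--image and image--non-image pairs. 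The only remaining pairs are the cross pairs $\{u^*,v_j\}$, and such a pair is resolved by any $G_1$-vertex $u_k\in S$ with $u_k\notin f^{-1}(v_j)$. Such a $u_k$ exists because $V(G_1)\setminus\{u^*\}\not\subseteq f^{-1}(v_j)$: otherwise every vertex but $u^*$ would map to $v_j$, forcing (since $s>1$, so $f$ is not constant) $u^*$ alone into its own fiber, contradicting that $u^*$ lies in a fiber of size $\ge2$.

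I expect the upper bound to be the main obstacle. The naive candidate --- delete one representative from each fiber together with one image and one non-image vertex --- has the right cardinality $s+2$ but is \emph{not} valid: a small example (already $C(K_3,f)$ with one fiber of size $2$) shows that a cross pair then becomes unresolvable, since two codes collide. The correct $T$ instead keeps almost all of $G_1$ and discards essentially all of $G_2$, and the delicate point is exactly the verification of the cross pairs $\{u^*,v_j\}$, which is where both hypotheses $s>1$ (so that $f$ is non-constant) and $s<n$ (so that a fiber of size $\ge2$ exists, pinning down a good choice of $u^*$) are used.
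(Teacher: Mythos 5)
Your proof is correct and follows essentially the same route as the paper: the lower bound combines the two twin observations (within fibers and among non-image vertices) with the key fact that at most one fiber can have both a fiber vertex and its image outside $S$ (the paper phrases this as: for each pair $i<j$, one of $\{u_{a_i},u_{a_j},v_i,v_j\}$ must lie in $S$), and your resolving set $(V(G_1)\setminus\{u^*\})\cup(B'\setminus\{b\})$ is exactly the paper's set $\{u_2,\dots,u_n,v_{s+1},\dots,v_{n-1}\}$ with $u^*=u_1$ and $b=v_n$. The only cosmetic difference is that you verify resolution pairwise via the "which vertices resolve which pairs" table, while the paper writes out the metric codes explicitly; the content is identical.
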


\begin{proof}
Without loss of generality, we may assume that $f(V(G_1))=\{v_1, v_2, \ldots, v_s\}$ such that $|f^{-1}(v_i)|=k_i$ with $k_1 \ge k_2 \ge  \ldots \ge k_s \ge 1$ and $\sum_{i=1}^{s}k_i=n$. Further, we may assume that $f^{-1}(v_1)=\{u_i \mid 1 \le i \le k_1\}$, $f^{-1}(v_2)=\{u_i \mid k_1+1 \le i \le k_1+k_2\}, \ldots$, and $f^{-1}(v_s)=\{u_i \mid 1+ \sum_{t=1}^{s-1} k_{t} \le i \le \sum_{t=1}^{s} k_{t}\}$; we adopt the convention that $\sum_{i=a}^{b} f(i)=0$ when $b<a$. Since $|f(V(G_1))|<n$, $k_1 \ge 2$. 

\vspace{.1in}

First, we show that $\dim(C(K_n, f)) \ge 2n-2-s$. Let $S$ be any minimum resolving set of $C(K_n,f)$. Since any two vertices in $\{v_{s+1}, v_{s+2}, \ldots, v_n\}$ are twins, all but one of these ($n-s$) vertices must be in $S$, say $S_0=\{v_{s+1}, v_{s+2}, \ldots, v_{n-1}\} \subseteq S$ with $|S_0|=n-s-1$. Similarly, for each $i$ ($1 \le i \le s$), $f^{-1}(v_i)$ consists of $k_i$ vertices such that any two vertices in $f^{-1}(v_i)$ are twins, and thus ($k_i-1$) of each $f^{-1}(v_i)$ must be in $S$; we may assume that $S_1=\cup_{i=1}^{s} \{u_j \mid 1+\sum_{t=1}^{i-1}k_t \le j \le (\sum_{t=1}^{i}k_t)-1\} \subseteq S$ with $|S_1|=n-s$. We denote by $a_i=\sum_{t=1}^{i}k_t$ for each $i$ ($1 \le i \le s$). For each $i$ and $j$, $1 \le i < j \le s$, consider the two vertices $u_{a_i}$ and $u_{a_j}$. They are both distance 1 from every $u_{\ell}$ with $\ell \neq a_i$ and $\ell \neq a_j$ and distance 2 from every $v_{m}$ with $m \neq i$ and $m \neq j$. Thus, to resolve these two vertices, one of the vertices in $\{u_{a_i}, u_{a_j}, v_i, v_j\}$ must be in $S$. Since this is true for every pair $i$ and $j$ with $1 \le i < j \le s$, there exists at most one $\ell$ such that $\{u_{a_{\ell}}, v_{\ell}\} \cap S=\emptyset$. Thus, we have $|S| \ge (n-s-1)+(n-s)+(s-1)=2n-2-s$.

\vspace{.1in}

Next, we show that $S=\{u_2, u_3, \ldots, u_n, v_{s+1}, v_{s+2}, \ldots, v_{n-1}\}$ is a resolving set of $C(K_n, f)$ with $|S|=(n-1)+(n-1-s)=2n-2-s$. Clearly, (i) $u_1$ has 1 in the $k$-th entry and 2 in the rest of the entries of its code, where $1 \le k \le n-1$; (ii) for $1 \le i \le s$, each $v_i$ has 1 in the $k$-th entry and 2 in the rest of the entries of its code, where $n \le k \le 2n-2-s$ or $\max\{1, \sum_{t=1}^{i-1}k_t\} \le k \le (\sum_{t=1}^{i}k_t)-1$; (iii) $v_n$ has 2 in the $k$-th entry and 1 in the rest of the entries of its code, where $1 \le k \le n-1$. Thus, $S$ is a resolving set for $C(K_n, f)$, and hence $\dim(C(K_n, f)) \le 2n-2-s$ for $1<s<n$. 

\vspace{.1in}

Therefore, $\dim(C(K_n,f))=2n-2-s$ for $1 < s <n$. \hfill
\end{proof}
\begin{figure}[htpb]
\begin{center}
\scalebox{0.43}{\input{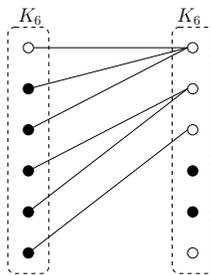}} \caption{$\dim(C(K_6, f)) = 7$, where $|f(V(G_1))|=3$}\label{completeF}
\end{center}
\end{figure}

%%%%%%%%%%%%%%%%%%%%%%%%%%%%%%%%%%%%%%%%%%%%%%%%%
%%%%%%%%%%%%%%%%%%%%%%%%%%%%%%%%%%%%%%%%%%%%%%%%%

\section{Metric Dimension of Functigraphs on Cycles}

In this section, we give bounds of the metric dimension of functigraphs on cycles. Let $G=C_n$ for $n \ge 3$, and let $G_1$ and $G_2$ be disjoint copies of $G$. Let $V(G_1)=\{u_i \mid 1 \le i \le n\}$ and let $E(G_1)=\{u_iu_{i+1} \mid 1 \le i \le n-1\} \cup \{u_1u_n\}$; similarly, let $V(G_2)=\{v_i \mid 1 \le i \le n\}$ and let $E(G_1)=\{v_iv_{i+1} \mid 1 \le i \le n-1\} \cup \{v_1v_n\}$. 

\begin{Prop}\cite{cartesian}\label{Cid}
For $n \ge 3$, 
\begin{equation*} \dim(C_n \square K_2) = \left\{
\begin{array}{ll}
2 & \mbox{ if $n$ is odd } \\
3 & \mbox{ if $n$ is even }  .
\end{array} \right.
\end{equation*}
\end{Prop}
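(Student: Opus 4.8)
The plan is to coordinatize the prism and reduce everything to distances on the two cycles. Write each vertex of $C_n \square K_2$ as $(i,j)$ with $i \in \mathbb{Z}_n$ and $j \in \{0,1\}$, so that the Cartesian-product metric becomes $d((i,j),(i',j'))=\rho(i,i')+|j-j'|$, where $\rho$ denotes distance in $C_n$. A single recurring computation drives the whole argument: two distinct ring positions $\{p,q\}$ resolve $C_n$ if and only if $2p \not\equiv 2q \pmod n$, since a same-code pair $i \ne i'$ forces $i' \equiv 2p-i \equiv 2q-i$. Thus for odd $n$ any two distinct positions resolve $C_n$, while for even $n=2m$ they fail exactly when $p,q$ are antipodal. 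Since $C_n \square K_2$ contains a cycle and hence is not a path, Theorem \ref{dimthm}(a) already gives $\dim(C_n \square K_2) \ge 2$ in all cases.

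For odd $n=2m+1$ I would prove $\dim=2$ by verifying that $S=\{(0,0),(m,0)\}$ resolves. Same-layer pairs reduce to resolving $C_n$ by $\{0,m\}$, which holds by the fact above. For a cross-layer pair $(i,0),(i',1)$ equal codes would require $\rho(i,0)-\rho(i',0)=1$ and $\rho(i,m)-\rho(i',m)=1$, hence the function $i \mapsto \rho(i,0)+\rho(i,m)$ would differ by $2$ at $i$ and $i'$. But because $0$ and $m$ realize the diameter of the odd cycle, a short arc computation shows this sum takes only the two values $m$ and $m+1$, so it cannot jump by $2$; no cross-layer collision exists, and $S$ resolves.

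The heart of the even case is the lower bound $\dim \ge 3$, i.e.\ ruling out every two-element set $S=\{(p,s),(q,t)\}$; after a rotation take $p=0$. If $q \in \{0,m\}$ (same rung-position or antipodal), the ring reflection $\alpha:(i,j)\mapsto(-i,j)$ is a nontrivial automorphism fixing both landmarks, so it carries, e.g., $(1,0)$ to the distinct vertex $(-1,0)$ with the same code. The genuinely delicate case is $q \notin \{0,m\}$, where no such automorphism is available and one must produce a collision by hand. Here I would look for an adjacent cross-layer pair $(i+1,0),(i,1)$; equating codes amounts to prescribing the two one-step differences $\rho(i+1,0)-\rho(i,0)=1-2s$ and $\rho(i+1,q)-\rho(i,q)=1-2t$. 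Each such difference equals $+1$ precisely on an arc of length $m$ (the ascending side of the corresponding tent function), namely $[0,m-1]$ and $[q,q+m-1]$; since $q \not\equiv 0,m$ these two arcs are neither equal nor complementary, so the four sign patterns $(\pm1,\pm1)$ are all realized by some $i$. In particular the pattern matching $(1-2s,1-2t)$ occurs, yielding a colliding pair and showing $S$ fails. I expect this sign-pattern analysis to be the main obstacle, since it is exactly where the evenness of $n$ is used and where the reflection argument breaks down.

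Finally, for the even upper bound I would exhibit $S=\{(0,0),(1,0),(0,1)\}$ of size $3$. The layer is recovered directly from $d(\cdot,(0,0))-d(\cdot,(0,1))=2j-1$; once $j$ is known, $\rho(i,0)$ and $\rho(i,1)$ are determined, and $\{0,1\}$ resolves $C_n$ (they are non-antipodal for $n \ge 3$), so $i$ is determined as well. Combined with the lower bound this gives $\dim(C_n \square K_2)=3$ for even $n$, completing both parities.
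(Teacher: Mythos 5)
Your proposal cannot be compared against an internal argument, because the paper gives none: Proposition~\ref{Cid} is quoted from \cite{cartesian} without proof. Judged on its own, your proof is correct and complete, and it is a genuine contribution relative to the paper, which treats this as a black box. The ingredients all check out: the additive metric $d((i,j),(i',j'))=\rho(i,i')+|j-j'|$ is the correct Cartesian-product distance; the criterion that $\{p,q\}$ resolves $C_n$ iff $2p\not\equiv 2q \pmod n$ is right (an unresolved pair forces $i'\equiv 2p-i\equiv 2q-i$); and in the odd case the sum $\rho(i,0)+\rho(i,m)$ does take only the values $m$ and $m+1$ on $C_{2m+1}$, so no cross-layer collision can produce the required difference of $2$. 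The even lower bound is where the real work lies, and your two cases hold up: for $q\in\{0,m\}$ the reflection $i\mapsto -i$ is an automorphism fixing both landmarks and moving $(1,0)$, and for $q\notin\{0,m\}$ your sign-pattern argument is sound precisely because $n$ is even, so each one-step difference $\rho(i+1,\cdot)-\rho(i,\cdot)$ is exactly $\pm 1$ (no zero steps), the two $+1$-arcs $[0,m-1]$ and $[q,q+m-1]$ each have size $m$, and the counting argument (an empty intersection of two $m$-sets among complementary pairs would force equality of arcs, contradicting $q\not\equiv 0,m$) guarantees every pattern $(\pm 1,\pm 1)$, hence the collision $(i+1,0),(i,1)$ for any choice of layers $s,t$. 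The even upper bound via $\{(0,0),(1,0),(0,1)\}$ — recover the layer from the difference of two rung-mates' distances, then resolve the ring position by the non-antipodal pair $\{0,1\}$ — is also correct. In short: where the paper imports the result, you have supplied an elementary, self-contained proof.
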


\begin{Thm}
Let $G=C_n$ be the cycle of order $n \ge 3$, and let $\sigma: V(G_1) \rightarrow V(G_2)$ be a permutation. Then $2 \le \dim(C(C_n, \sigma)) \le n-1$, and both bounds are attainable. 
\end{Thm}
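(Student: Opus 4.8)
The plan is to establish the two bounds separately and then exhibit permutations achieving each extreme. For the lower bound, I would simply invoke Theorem~\ref{functibounds}: since $C(C_n,\sigma)$ is a functigraph on a connected graph of order $n\ge 3$, we immediately have $\dim(C(C_n,\sigma))\ge 2$. The attainability of this lower bound is the easiest part: taking $\sigma$ to be the identity realizes $C(C_n,\mathrm{id})\cong C_n\square K_2$, and Proposition~\ref{Cid} gives $\dim(C_n\square K_2)=2$ whenever $n$ is odd. Thus for odd $n$ the value $2$ is attained outright, which suffices for ``attainability'' of the lower bound.

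For the upper bound $\dim(C(C_n,\sigma))\le n-1$, the natural approach is to produce an explicit resolving set of size $n-1$ that works for \emph{every} permutation $\sigma$. A clean candidate is $S=V(G_1)\setminus\{u_n\}=\{u_1,u_2,\ldots,u_{n-1}\}$, i.e.\ all but one vertex of the first cycle. I would argue as follows: within $G_1$, the $n-1$ chosen vertices already resolve all of $G_1$ (removing a single vertex from a full vertex set of a connected graph always leaves a resolving set, since at most one vertex is unlabeled and it is pinned down by its distances to everything else), so any two vertices of $V(G_1)$ receive distinct codes. The real content is distinguishing vertices of $G_2$ from each other and from vertices of $G_1$. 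Each $v_j\in V(G_2)$ reaches $S$ only through the cross edges $\{u\,\sigma(u)\}$: a shortest path from $v_j$ to some $u_i$ must traverse the cycle $G_2$ to reach a vertex $u_k$ with $\sigma(u_k)=v_{k'}$ in the appropriate fiber, then cross into $G_1$, then travel along $G_1$ to $u_i$. Writing these distances explicitly in terms of the cyclic distances in $C_n$ and the permutation $\sigma$, I would verify that the resulting codes of the $v_j$'s are mutually distinct and distinct from the codes of the $u_i$'s.

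The step I expect to be the main obstacle is precisely this verification that $S=\{u_1,\ldots,u_{n-1}\}$ resolves the $G_2$-vertices for an \emph{arbitrary} permutation $\sigma$, because the cross edges can scramble the natural cyclic structure and the shortest $v_j$-to-$u_i$ paths need not cross at a single predictable location. To control this, I would fix a vertex $u_i\in S$ and analyze $d(v_j,u_i)=\min_{k}\bigl(d_{G_2}(v_j,v_{\sigma(k)})+1+d_{G_1}(u_k,u_i)\bigr)$, the minimum being over all cross edges, and show that two distinct targets $v_j\neq v_{j'}$ cannot agree on this quantity for all $i\le n-1$ simultaneously; intuitively, at least one $u_i$ in $S$ ``sees'' the two vertices at different cycle-distances once we account for the shift induced by $\sigma$. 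A possible subtlety is ensuring a vertex $v_j$ is never confused with a $u_i$: here one can note that the $u_i$-vertices have a neighbor inside $S$ (along $G_1$) forcing a $0$ or small-distance entry, whereas a $v_j$ cannot, giving a code-entry of $0$ that no $v_j$ can match. Finally, for attainability of the \emph{upper} bound $n-1$, I would select a specific permutation—the most symmetric choice being one that makes $C(C_n,\sigma)$ highly symmetric—and verify no set of size $n-2$ resolves it, likely by a twin-style or symmetry counting argument paralleling the complete-graph case.
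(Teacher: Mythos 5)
Your lower bound and its attainability (the identity on an odd cycle, via Proposition~\ref{Cid}) match the paper exactly, but the two remaining parts of your plan have genuine gaps. For the upper bound, you chose essentially the right set $S=V(G_1)\setminus\{u_n\}$ but then set up the verification as a shortest-path minimization $d(v_j,u_i)=\min_k\bigl(d_{G_2}(v_j,v_{\sigma(k)})+1+d_{G_1}(u_k,u_i)\bigr)$ and acknowledged you do not know how to finish it. The idea you are missing is that no distance computation along the cycles is needed: because $\sigma$ is a bijection, every vertex of $G_2$ has \emph{exactly one} neighbor in $G_1$, and every vertex of $G_1$ has exactly one neighbor in $G_2$. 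Hence one only reads off which code entries equal $1$: for $i\le n-1$, the vertex $\sigma(u_i)$ has a $1$ precisely in the entry of $u_i$ and nowhere else; $\sigma(u_n)$ has no entry equal to $1$; and $u_n$ has a $1$ in exactly two entries (those of $u_1$ and $u_{n-1}$). These ``$1$-patterns'' are pairwise distinct, which resolves all vertices outside $S$ at once --- that is the paper's entire argument. Note also that your proposed fix for separating $G_1$-vertices from $G_2$-vertices (``a code-entry of $0$ that no $v_j$ can match'') works only for vertices of $S$; the vertex $u_n$ has no $0$-entry, and you need the two-ones observation to separate it from $V(G_2)$.

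Second, your treatment of attainability of the upper bound is left open, and the route you sketch is unlikely to close it: cycles have no twins, $C(C_n,\sigma)$ is $3$-regular, and for large $n$ no permutation will have metric dimension anywhere near $n-1$, so a twin-style or symmetry counting argument ``paralleling the complete-graph case'' has no analogue here. Attainability only requires a single instance, and you already had the tool in hand: Proposition~\ref{Cid}, which you cited for the lower bound, also gives $\dim(C(C_4,\mathrm{id}))=\dim(C_4\square K_2)=3=n-1$, which is exactly how the paper settles the sharpness of the upper bound.
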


\begin{proof}
By Theorem \ref{functibounds}, $\dim(C(C_n, \sigma)) \ge 2$. Next, we show that $\dim(C(C_n,\sigma)) \le n-1$. We will show that $S=\{u_2, u_3, \ldots, u_n\}$ is a resolving set of $C(C_n, \sigma)$. Note that (i) $u_1$ has 1 exactly in the 1st and in the $(n-1)$th entries of its code; (ii) $\sigma(u_1)$ does not contain 1 in any entry of its code; (iii) for $2 \le i \le n$, each $\sigma(u_i)$ has 1 exactly in the $(i-1)$th entry of its code. Since $S$ is a resolving set of $C(C_n, \sigma)$ with $|S|=n-1$, $\dim(C(C_n,\sigma)) \le n-1$. For the sharpness of the lower bound, $\dim(C(C_5, id))=2$ by Proposition \ref{Cid}; for the sharpness of the upper bound, $\dim(C(C_4, id))=3$ by Proposition \ref{Cid}. \hfill
\end{proof}

We recall the following 

\begin{Thm} \cite{wheel2,wheels} \label{wheel}
For $n \ge 3$, let $W_{1,n}=C_n+K_1$ be the wheel graph on $n+1$ vertices. Then 
\begin{equation*} \dim(W_{1,n}) = \left\{
\begin{array}{cl}
3 & \mbox{if $n=3$ or $n=6$}, \\
\lfloor \frac{2n+2}{5}\rfloor & \mbox{otherwise} .
\end{array} \right.
\end{equation*}
\end{Thm}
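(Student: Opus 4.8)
\section*{Proof proposal}

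The plan is to work directly with the distance structure of $W_{1,n}$. Write $h$ for the hub (the central $K_1$) and $c_1,\dots,c_n$ for the rim vertices in cyclic order. For $n=3$ the wheel is $K_4$, so $\dim=3$ by part (b) of Theorem \ref{dimthm}; assume henceforth $n\ge 4$, so that the diameter is $2$: the hub is at distance $1$ from every rim vertex, while two rim vertices are at distance $1$ if consecutive on the rim and at distance $2$ otherwise. Hence every coordinate of every code is $0$, $1$, or $2$, the code of a rim vertex $c_i$ with respect to a rim landmark $c_j$ is $1$ exactly when $c_j\in\{c_{i-1},c_{i+1}\}$ and $2$ otherwise, and its distance to $h$ is always $1$. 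Since $h$ is equidistant from all rim vertices it contributes nothing to separating them, so I would first argue that a minimum resolving set may be taken inside the rim (adjoining $h$ only ever enlarges the set).

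Next I would translate ``$S$ resolves $W_{1,n}$'' into a combinatorial condition on the cyclic sequence of gaps that $S$ cuts the rim into. Let $S$ consist of $k\ge 3$ rim vertices, splitting the rim into $k$ gaps (maximal runs of non-landmarks) of lengths $g_1,\dots,g_k$ with $\sum_t g_t=n-k$. A non-landmark's code is determined entirely by which of its at most two rim neighbors lie in $S$, so I would classify each non-landmark by the set of adjacent landmarks: a length-$1$ gap yields a vertex flanked by two landmarks (size-two label), the two ends of a longer gap yield size-one labels, and the interior of a gap yields the all-$2$'s (empty) label. Running through all possible collisions shows that $S$ is resolving if and only if (A) at most one gap has length $3$ and none has length $\ge 4$ (so there is at most one all-$2$'s vertex), and (B) no two cyclically consecutive gaps both have length $\ge 2$ (so no landmark has both neighbors carrying its sole label); separation of $h$ from the rim is automatic once $k\ge 3$, since no rim vertex is then adjacent to all landmarks.

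With this dictionary the extremal problem is transparent. Under (A) and (B) the sum $\sum_t g_t$ is maximized by taking the long gaps ($\ge 2$) to be a maximum cyclic independent set, of size $\lfloor k/2\rfloor$, one of them of length $3$ and the rest of length $2$, with all remaining gaps of length $1$; this gives $\sum_t g_t\le k+\lfloor k/2\rfloor+1$, i.e.\ the sharp inequality $n\le 2k+\lfloor k/2\rfloor+1$. Inverting for the least admissible $k$ yields $k\ge\lfloor (2n+2)/5\rfloor$, the lower bound. For the upper bound I would exhibit, for each $n$ with $k=\lfloor (2n+2)/5\rfloor$, an explicit gap sequence satisfying (A) and (B) with $\sum_t g_t=n-k$ (obtained by shortening gaps in the extremal pattern), which by the dictionary is a resolving set of the required size.

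Finally I would dispose of the small cases by hand, since the reduction to (A),(B) assumed $k\ge 3$. When $\lfloor(2n+2)/5\rfloor\in\{1,2\}$ the hub can coincide in code with a rim vertex and must be checked separately: direct inspection gives $\dim(W_{1,4})=\dim(W_{1,5})=2$ (matching the formula), but for $n=6$ every two-element set fails---any pair of rim landmarks either produces two equal rim codes or leaves a rim vertex with the all-$1$'s code of $h$---forcing $\dim(W_{1,6})=3$; together with $\dim(W_{1,3})=\dim(K_4)=3$, these are exactly the two stated exceptions. The main obstacle I anticipate is precisely this boundary: proving that (A) and (B) are not merely necessary but \emph{exactly} characterize resolving rim sets (the collision bookkeeping among size-zero, size-one, size-two labels, and the hub), and then handling the genuinely different behavior at $k\le 2$, where the hub's uniform distance to the rim creates the $n=3,6$ anomalies.
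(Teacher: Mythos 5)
The paper never proves this statement: Theorem \ref{wheel} is recalled purely as a citation from \cite{wheel2} and \cite{wheels} (it is quoted for use alongside Theorem \ref{thmcycle_constant}), so there is no internal proof to compare yours against. Judged on its own merits, your outline is correct, and it is essentially the argument used in the cited literature: restrict attention to landmarks on the rim, encode a set of $k$ rim landmarks by the cyclic sequence of gap lengths, characterize resolving sets by your conditions (A) (no gap of length $\ge 4$, at most one of length $3$) and (B) (no two cyclically consecutive gaps of length $\ge 2$), derive the sharp inequality $n \le 2k + \lfloor k/2 \rfloor + 1$, and invert it to obtain $\lfloor (2n+2)/5 \rfloor$. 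Your handling of the exceptions is also right: $n=3$ gives $K_4$, and for $n=6$ the only gap patterns available to $k=2$, namely $(4,0)$, $(3,1)$, $(2,2)$, all fail --- the middle one precisely because the length-$1$ gap vertex acquires the hub's all-$1$'s code. Two points need tightening in a full write-up. First, the opening claim that ``a minimum resolving set may be taken inside the rim'' is better argued via necessity than via modification: since $h$ resolves no pair of rim vertices, $S \cap V(C_n)$ must already satisfy (A) and (B) for \emph{any} resolving $S$ (each colliding pair you exhibit is also equidistant from $h$), which yields $|S| \ge |S \cap V(C_n)| \ge \lfloor (2n+2)/5 \rfloor$ directly, with no replacement argument needed. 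Second, the equivalence ``(A) and (B) if and only if resolving'' for $k \ge 3$ is the crux and is only asserted (``running through all possible collisions''); it is true, but the write-up must actually check that labels of different sizes never collide, that two length-$1$ gaps determine distinct landmark pairs once $k \ge 3$, and that landmarks (having a $0$ entry) and the hub (all $1$'s) are automatically separated --- exactly the bookkeeping you flag as the anticipated obstacle.
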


In order to determine the metric dimension of $C(C_n, f_0)$ for a constant function $f_0$, we need the following lemma.

\begin{Lem} \label{lemC}
Let $G=C_n$ be the cycle of order $n \ge 6$, and let $f_0: V(G_1) \rightarrow V(G_2)$ satisfy $f_0(u_i)=v_1$ for each $i$ ($1 \le i \le n$). Let $S$ be a minimum resolving set of $C(C_n, f_0)$. 
\begin{itemize}
\item[(a)] There exists at most one vertex $u$ in $G_1$ such that $u \not\in N[S]$. Moreover, if $C_n$ is an even cycle and $|V(G_1) - N[S]|=1$, then at least two vertices in $G_2$ must belong to $S$.
\item[(b)] Let $S \cap V(G_1)=\{u_{i_1}, u_{i_2}, \ldots, u_{i_t}\}$, where $i_1<i_2< \ldots <i_t $. If $i_{j+1}-i_{j} = 3$ for some $j$, say $2 \le j \le n-2$ by relabeling if necessary, then $d_{G_1}(u_{i_j},u_{i_{j-1}}) \le 2$ and $d_{G_1}(u_{i_{j+2}},u_{i_{j+1}}) \le 2$.
\item[(c)] For each $u_i \in S \cap V(G_1)$, to resolve $A=N(u_i) \cap V(G_1)$, at least a vertex in $(N[A] - \{u_i\}) \cap V(G_1)$ must belong to $S$.
\end{itemize}
\end{Lem}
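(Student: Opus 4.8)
The plan is to reduce all three parts to a small package of distance identities in $C(C_n,f_0)$ and then read each assertion off them. Since $f_0$ is constant the only edges joining $G_1$ to $G_2$ are the edges $u_iv_1$, so $v_1$ is a cut vertex separating $V(G_1)$ from $V(G_2)-\{v_1\}$. First I would record: (D1) $d(u_i,u_j)=1$ if $u_iu_j\in E(G_1)$ and $d(u_i,u_j)=2$ otherwise; (D2) $d(u_i,v_1)=1$ for every $i$; (D3) $d(u_i,v_j)=1+d_{G_2}(v_1,v_j)$ for every $i$ and every $j\neq 1$; and (D4) $d(v_i,v_j)=d_{G_2}(v_i,v_j)$. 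Each follows because any path between the two copies must pass through $v_1$. The decisive consequence is (D3): the distance from a $G_1$-vertex to a fixed $G_2$-vertex is \emph{independent} of the chosen $G_1$-vertex, so \emph{no} vertex of $S\cap V(G_2)$ can separate two vertices of $G_1$, and by (D2) the hub $v_1$ is at distance $1$ from all of $G_1$. Hence two vertices of $G_1$ are told apart only by $S\cap V(G_1)$, and by (D1) a landmark $u_k$ separates $u_c$ from $u_{c'}$ exactly when precisely one of them equals or is adjacent to $u_k$.

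For the first assertion of (a), suppose $u,u'\in V(G_1)$ both lie outside $N[S]$. If $v_1\in S$ then $V(G_1)\subseteq N(S)$ by (D2), a contradiction, so $v_1\notin S$; then since a $G_1$-vertex has no $S$-neighbor in $G_2$, every $S\cap V(G_1)$-entry of $code_S(u)$ and of $code_S(u')$ equals $2$ by (D1), while the $S\cap V(G_2)$-entries agree by (D3). Thus $code_S(u)=code_S(u')$, contradicting that $S$ resolves. For the ``moreover'' statement, let $u_a$ be the unique vertex outside $N[S]$; as above $v_1\notin S$. Assume $|S\cap V(G_2)|\le 1$. If $S\cap V(G_2)=\emptyset$, then $v_2$ and $v_n$ (each at $G_2$-distance $1$ from $v_1$) have identical codes by (D1)--(D4); so write $S\cap V(G_2)=\{v_m\}$, $m\neq1$. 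Now consider $u_a,v_2,v_n$: by (D1) and (D3) all three have every $S\cap V(G_1)$-entry equal to $2$, so they can be separated only through $v_m$. Here $n$ even is used: on an even cycle no vertex is equidistant from the two endpoints of an edge, so $d_{G_2}(v_2,v_m)$ and $d_{G_2}(v_n,v_m)$ each differ from $d_{G_2}(v_1,v_m)$ by exactly $1$. The $v_m$-coordinate of $u_a$ is $1+d_{G_2}(v_1,v_m)$ by (D3); if $d_{G_2}(v_2,v_m)$ or $d_{G_2}(v_n,v_m)$ equals $d_{G_2}(v_1,v_m)+1$ then $u_a$ collides with that neighbor, and otherwise both equal $d_{G_2}(v_1,v_m)-1$ and $v_2$ collides with $v_n$. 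Since the colliding partner is never $v_m$ itself, $S$ fails to resolve, so $|S\cap V(G_2)|\ge 2$.

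Parts (b) and (c) are then bookkeeping on the same facts. For (b), after a cyclic relabeling of the landmarks so that $u_{i_{j-1}},u_{i_{j+2}}$ exist, the gap $i_{j+1}-i_j=3$ leaves exactly two interior vertices $u_{i_j+1}$ and $u_{i_j+2}=u_{i_{j+1}-1}$; by (D1) the only landmark adjacent to $u_{i_j+1}$ is $u_{i_j}$, so its $S\cap V(G_1)$-code carries a single $1$, in the $u_{i_j}$-coordinate (similarly $u_{i_{j+1}-1}$ has its single $1$ in the $u_{i_{j+1}}$-coordinate). If $d_{G_1}(u_{i_j},u_{i_{j-1}})\ge 3$, then $u_{i_j-1}$ is interior with $u_{i_j}$ as its only landmark-neighbor, so it too has a single $1$ in the $u_{i_j}$-coordinate; by (D3) its $G_2$-entries match those of $u_{i_j+1}$, giving identical codes, a contradiction. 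Hence $d_{G_1}(u_{i_j},u_{i_{j-1}})\le 2$, and symmetrically $d_{G_1}(u_{i_{j+1}},u_{i_{j+2}})\le 2$. For (c), with $A=N(u_i)\cap V(G_1)=\{u_{i-1},u_{i+1}\}$, facts (D2)--(D4) show every $G_2$-vertex and $u_i$ itself are equidistant from $u_{i-1}$ and $u_{i+1}$, while by (D1) a vertex $u_k\in V(G_1)$ separates the pair iff exactly one of them equals or is adjacent to $u_k$; a direct check shows this holds only for $u_k\in\{u_{i-2},u_{i-1},u_{i+1},u_{i+2}\}=(N[A]-\{u_i\})\cap V(G_1)$, so $S$ must meet this set.

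The routine parts are (b) and (c) and the first half of (a). The hard part will be the ``moreover'' clause of (a): it is where the even-cycle hypothesis is genuinely needed, through the fact that on an even cycle the two neighbors of $v_1$ are never equidistant from any vertex $v_m$, which forces the three all-$2$ vertices $u_a,v_2,v_n$ to collide pairwise under a single $G_2$-landmark. I would also take care over the boundary cases $v_m\in\{v_2,v_n\}$ and $v_m$ antipodal to $v_1$, checking in each that the colliding pair avoids $v_m$ and hence lies outside $S$.
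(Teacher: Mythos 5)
Your proof is correct and takes essentially the same approach as the paper's: the distance facts (D1)--(D4), the key observation that no $G_2$-vertex resolves any pair in $G_1$, and the collision arguments for (b), (c), and the first half of (a) are exactly the ones the paper uses. The only cosmetic difference is in the ``moreover'' clause of (a), where the paper places the single $G_2$-landmark $y$ by symmetry in $\{v_2,\ldots,v_{n/2}\}$ and exhibits the identity $d(y,u)=d(y,v_1)+1=d(y,v_n)$, whereas you reach the same unavoidable collision among $u_a$, $v_2$, $v_n$ via bipartite parity of the even cycle; both implementations are sound.
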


\begin{proof}
(a) Suppose there exist at least two vertices in $V(G_1) - N[S]$, say $u_p, u_q \in V(G_1) - N[S]$. Then, for each $x \in S \cap V(G_1)$, $d(u_p, x)=d(u_q, x)=2$. Noting that no vertex in $G_2$ resolves any two vertices in $G_1$, we have $code_S(u_p)=code_S(u_q)$. Thus at most one vertex belongs to $V(G_1) - N[S]$. Next, suppose that $u \in V(G_1) - N[S]$ and $C_n$ is an even cycle. Notice that, for each $x \in S \cap V(G_1)$, $d(u, x)=d(v_2, x)=d(v_n, x)=2$. Assume that $|S \cap V(G_2)|=1$. Since neither $v_1$ nor $v_{\frac{n}{2}+1}$ resolves $v_2$ and $v_n$, we may assume that $y \in S \cap V(G_2)$ for some $y \in \{v_2, v_3, \ldots, v_{\frac{n}{2}}\}$. But, $d(y, u)=d(y, v_1)+1=d(y, v_n)$ for $y \in \{v_2, v_3, \ldots, v_{\frac{n}{2}}\}$. Thus, $|S \cap V(G_2)| \ge 2$ in this case.\\

(b) Suppose that $i_{j+1}-i_{j}=3$ for some $j$ with $2 \le j \le n-2$ (by relabeling if necessary), and that $d_{G_1}(u_{i_j},u_{i_{j-1}}) \ge 3$ or $d_{G_1}(u_{i_{j+2}},u_{i_{j+1}}) \ge 3$. Without loss of generality, assume that  $d_{G_1}(u_{i_j},u_{i_{j-1}}) \ge 3$. Then the two vertices in $N(u_{i_j}) \cap V(G_1)$ have the same code.\\

(c) Let $u_i \in S \cap V(G_1)$, where $3 \le i \le n-2$ by relabeling if necessary. If $\{u_{i-2}, u_{i-1}, u_{i+1}, u_{i+2}\} \cap S=\emptyset$, then $code_S(u_{i-1})=code_S(u_{i+1})$.~\hfill 
\end{proof}

\begin{Thm}\label{thmcycle_constant}
Let $G=C_n$ be the cycle of order $n \ge 3$, and let $f_0:V(G_1) \rightarrow V(G_2)$ be a  constant function. Then
\begin{equation*} \dim(C(C_n,f_0)) = \left\{
\begin{array}{cl}
3 & \mbox{ if $n=3$}, \\
\left\lceil\frac{2n+3}{5}\right\rceil & \mbox{ if $n$ is odd and } n \neq 3, \\
\left\lceil\frac{2n}{5}\right\rceil + 1 & \mbox{ if $n$ is even} .
\end{array} \right.
\end{equation*}
\end{Thm}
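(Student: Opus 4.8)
The plan is to exploit the fact that $C(C_n,f_0)$ is the wheel $W_{1,n}$ --- with rim $G_1$ and hub $v_1$ --- with the remaining path $v_2,v_3,\ldots,v_n$ of $G_2$ sewn back onto the hub. First I would record the distances: for distinct $u_i,u_j$, $d(u_i,u_j)=1$ if they are adjacent on $G_1$ and $2$ otherwise (the shortcut $u_i v_1 u_j$); $d(u_i,v_1)=1$; $d(u_i,v_j)=1+d_{G_2}(v_1,v_j)$ for $j\neq 1$; and $d(v_i,v_j)=d_{G_2}(v_i,v_j)$ throughout $G_2$. The crucial consequence is a \emph{decoupling}: $d(v_j,u_a)$ depends only on $j$, so no vertex of $G_2$ resolves a pair inside $G_1$, while every vertex of $G_1$ separates a pair $v_j,v_k$ exactly when $v_1$ does. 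Hence, writing $S_1=S\cap V(G_1)$ and $S_2=S\cap V(G_2)$, a set $S$ resolves $C(C_n,f_0)$ iff (i) $S_1$ resolves the rim of $W_{1,n}$, (ii) $S_2$ together with the ``virtual landmark'' $v_1$ (available once $S_1\neq\emptyset$) resolves the cycle $G_2$, and (iii) the cross-pairs $u_i,v_j$ surviving (i)--(ii) are separated. Since $S_1$ and $S_2$ are disjoint, $|S|=|S_1|+|S_2|$, which is what makes the bound additive.

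For the lower bound I would bound $|S_1|$ and $|S_2|$ separately. Resolving the rim of $W_{1,n}$ is exactly the wheel problem --- the hub, being at distance $1$ from every rim vertex, never separates two rim vertices, and the rim-to-rim distances agree with those in the wheel --- so $|S_1|\ge\dim(W_{1,n})$, evaluated by Theorem \ref{wheel}; for a self-contained count I would instead run the arc argument: parts (a)--(c) of Lemma \ref{lemC} constrain how far apart consecutive chosen rim vertices may be and bound by one the number of rim vertices lying outside $N[S]$, and a short optimization of these constraints reproduces the same value. For (ii), a single point $v_1$ resolves a cycle only up to its mirror symmetry, so $S_2$ must contain a vertex other than $v_1$, giving $|S_2|\ge 1$. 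This already settles the odd case: for an odd cycle $v_1$ plus one further vertex resolves $G_2$ (the symmetry has no extra fixed point), so $\dim(C(C_n,f_0))\ge\dim(W_{1,n})+1=\lceil(2n+3)/5\rceil$, the last equality being a routine floor/ceiling identity.

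The even case is the genuine interaction and I expect it to be the main obstacle. Here the antipode of $v_1$ is a fixed point of the mirror symmetry and, more importantly, the two hub-neighbors $v_2,v_n$ both carry the all-$2$ code against $S_1$, exactly like a rim vertex left outside $N[S_1]$. Lemma \ref{lemC}(a) is tailored to this: if the optimal rim resolution leaves a rim vertex outside $N[S]$ (equivalently, uses a maximal-length arc), then separating $\{u,v_2,v_n\}$ forces $|S_2|\ge 2$. I would then run the trade-off analysis: either $n$ can be resolved with $\dim(W_{1,n})$ rim vertices whose arcs all keep $V(G_1)\subseteq N[S]$, in which case $|S_2|\ge 1$ suffices, or it cannot, in which case one pays a second $G_2$-landmark; the delicate point --- the bookkeeping that decides the formula --- is to verify that ``spend the extra vertex in $G_1$ to cover the rim'' and ``spend it in $G_2$ to kill the symmetry'' tie at the same value for every even $n$, yielding $|S|\ge\lceil 2n/5\rceil+1$.

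For the upper bound I would exhibit explicit sets meeting these counts. In each parity I place the appropriate number of vertices on $G_1$ in the optimal wheel pattern (consecutive arcs of length $2$ with at most one longer arc), which resolves $G_1$, and adjoin one vertex of $G_2$ (two in the even uncovered-vertex configuration) chosen away from the antipode of $v_1$, so that $\{v_1\}\cup S_2$ resolves $G_2$ by the standard cycle fact that two vertices resolve $C_n$ iff, for $n$ even, they are not antipodal (the parity phenomenon of Proposition \ref{Cid}); a direct check of the surviving cross-pairs $u_i$ versus $v_j$ then completes the verification. Finally I would dispose of the small cases by hand: $n=3$ is immediate since $C_3=K_3$, so Theorem \ref{constant} gives $\dim(C(K_3,f_0))=3$, and I would confirm $n=4,5$ directly, since Lemma \ref{lemC} is stated only for $n\ge 6$ and Theorem \ref{wheel} has exceptions at $n=3,6$.
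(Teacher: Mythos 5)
Your overall architecture coincides with the paper's: the same decoupling facts (no vertex of $G_2$ resolves a pair inside $G_1$; the pair $v_2,v_n$ forces a landmark in $V(G_2)-\{v_1\}$), the same dichotomy on whether $S\cap V(G_1)$ leaves a rim vertex outside $N[S]$, driven by Lemma \ref{lemC}, and the same plan of explicit constructions plus a cross-pair check for the upper bound, with small cases done by hand. Your one real variation --- getting the odd lower bound from $|S\cap V(G_1)|\ge \dim(W_{1,n})$ via Theorem \ref{wheel} and the identity $\left\lfloor\frac{2n+2}{5}\right\rfloor+1=\left\lceil\frac{2n+3}{5}\right\rceil$ --- is sound (for $n\ge 7$ a set of rim vertices resolving all rim pairs has size at least $3$, hence automatically separates the hub as well, so the comparison with $\dim(W_{1,n})$ is legitimate), and it is a mild shortcut over the paper, which re-derives that count from Lemma \ref{lemC}. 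The even-case ``trade-off'' you defer is exactly the paper's Claim 1, and it closes as you predict: the covered branch forces $|S\cap V(G_1)|\ge\left\lceil\frac{2n}{5}\right\rceil$ and $|S\cap V(G_2)|\ge 1$, the uncovered branch forces $|S\cap V(G_1)|\ge\left\lceil\frac{2n-2}{5}\right\rceil$ and, by Lemma \ref{lemC}(a), $|S\cap V(G_2)|\ge 2$, and the minimum of the two is $\left\lceil\frac{2n}{5}\right\rceil+1$.

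The genuine gap is in your upper-bound prescription for placing the $G_2$-landmarks. You choose them only ``away from the antipode of $v_1$,'' with the sole stated purpose that $\{v_1\}\cup S_2$ resolve $G_2$. That constraint is vacuous for odd $n$ and fails precisely in the configurations that decide the formula, namely $n\equiv 1,3\pmod 5$, where one rim vertex $u_n$ is left outside $N[S]$ (and you cannot avoid this configuration: a covered construction costs one vertex too many for these residues). There $u_n$, $v_2$, $v_n$ all carry the all-$2$ code against $S\cap V(G_1)$, and since $d(u_n,v_j)=1+d_{G_2}(v_1,v_j)$ while $d(v_n,v_j)=\min(j,n-j)$, every landmark $v_j$ with $2\le j\le n/2$ gives $d(u_n,v_j)=j=d(v_n,v_j)$, and symmetrically every landmark on the other side ties $u_n$ with $v_2$. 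Concretely, for $n=11$ take $S_1=\{u_2,u_4,u_7,u_9\}$ and $S_2=\{v_3\}$: this meets all your stated requirements ($\{v_1,v_3\}$ resolves the odd cycle $G_2$), yet $code_S(u_{11})=(2,2,2,2,3)=code_S(v_{11})$, so your final cross-pair check would fail rather than complete. The only workable single landmark is the vertex at maximum distance from $v_1$, i.e.\ $v_{(n+1)/2}$ (the paper's choice); for even $n$ no single vertex works at all --- the antipode separates $u_n$ from $v_2,v_n$ but not $v_2$ from $v_n$, which is Lemma \ref{lemC}(a) again --- and the paper's pair $\{v_{n/2},v_{n/2+1}\}$ actually \emph{contains} the antipode of $v_1$, the opposite of your rule. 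With the landmarks repositioned maximally far from $v_1$ in the uncovered configurations, your outline becomes the paper's proof.
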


\begin{proof}
Without loss of generality, let $f_0(u_i)=v_1$ for each $i$ ($1 \le i \le n$). Let $S$ be a minimum resolving set of $C(C_n, f_0)$, where $n \ge 3$. First, we consider $3 \le n \le 5$. Notice that no vertex of $G_2$ resolves any two vertices of $G_1$, so at least two vertices of $G_1$ must be in $S$. Furthermore, at least a vertex in $V(G_2) - \{v_1\}$ must belong to $S$, since $v_2$ and $v_n$ are not resolved by any vertex of $G_1$. So, $|S| \ge 3$ for $3 \le n \le 5$. One easily checks that $\{u_1, u_3, v_3\}$ is a resolving set for $C(C_3, f_0)$ and $C(C_5, f_0)$. It's also easily checked that $\{u_1, u_2, v_2\}$ is a resolving set for $C(C_4, f_0)$. Thus, $\dim(C(C_n, f_0))=3$, consistent with the formula asserted in our theorem, for $3 \le n \le 5$.\\

Next, we consider for $n \geq 6$. Notice that no vertex of $G_2$ resolves any two vertices of $G_1$, and, as already observed, at least a vertex of $V(G_2) - \{v_1\}$ must belong to $S$.

\vspace{.1in}

\textbf{Claim 1.} For $n \ge 6$, $|S| \ge \lceil \frac{2n+3}{5}\rceil$ if $n$ is odd and $|S| \ge \lceil \frac{2n}{5}\rceil+1$ if $n$ is even.\\

\textit{Proof of Claim 1.} Let $S \cap V(G_1) = \{u_{i_1}, u_{i_2}, \ldots, u_{i_t}\}$, where $i_1 < i_2 < \ldots < i_t$, with $|S \cap V(G_1)|=t$. We consider two cases.

\vspace{.1in}

\emph{Case 1. There is a vertex $u \in V(G_1)$ such that $u \not \in N[S] \cap V(G_1)$:} Without loss of generality, we may assume that $\{u_1, u_2, \ldots, u_{n-1}\} \subseteq N[S] \cap V(G_1)$ and $u_n \not \in N[S] \cap V(G_1)$.  Then $i_1=2$, $i_t =n-2$, and, by (b) of Lemma \ref{lemC}, we have $i_2-i_1\leq 2$ and $i_t - i_{t-1} \leq 2$. By Lemma \ref{lemC}, $n-t \le 3(\frac{n-6}{5})+4=\frac{3n+2}{5}$ (see (A) of Figure~\ref{mrupper}); thus $t \geq \frac{2n-2}{5}$. If $n$ is odd, noting that $|S \cap V(G_2)| \ge 1$, $|S| \geq \left\lceil\frac{2n-2}{5}\right\rceil + 1 = \left\lceil\frac{2n+3}{5}\right\rceil$. If $n$ is even, by (a) of Lemma \ref{lemC}, $|S| \geq \left\lceil\frac{2n+3}{5}\right\rceil+1$. 

\vspace{.1in}

\emph{Case 2. There is no vertex $u \in V(G_1)$ such that $u \not \in N[S] \cap V(G_1)$:} In this case, $\{u_1, u_2, \ldots, u_{n}\} \subseteq N[S] \cap V(G_1)$. By Lemma \ref{lemC}, $n-t \le 3(\frac{n-5}{5})+3=\frac{3n}{5}$ (see (B) of Figure~\ref{mrupper}); thus $t \geq \frac{2n}{5}$. Since $|S \cap V(G_2)| \ge 1$, we have $|S| \geq \left\lceil\frac{2n}{5}\right\rceil + 1$. 

\vspace{.1in}

Since one of these two cases must occur, for $n$ odd, $\dim(C(C_n,f_0)) \ge  \min \{\left\lceil\frac{2n+3}{5}\right\rceil, \left\lceil\frac{2n+5}{5}\right\rceil\} =\left\lceil\frac{2n+3}{5}\right\rceil$. For $n$ even, $\dim(C(C_n,f_0)) \ge \min \{\left\lceil\frac{2n+3}{5}\right\rceil+1, \left\lceil\frac{2n+5}{5}\right\rceil\}=\left\lceil\frac{2n+5}{5}\right\rceil$. $\Box$

\begin{figure}[htpb]
\begin{center}
\scalebox{0.45}{\input{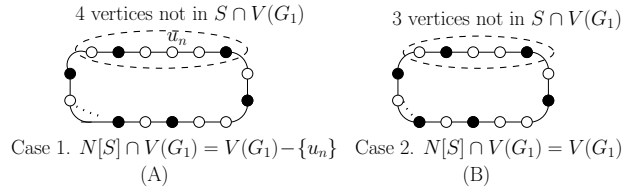}} \caption{The solid vertices in diagram (A) (resp., (B)) form $S \cap V(G_1)$ in Case 1 (resp., Case 2). }\label{mrupper}
\end{center}
\end{figure}

\textbf{Claim 2.} For $n \ge 6$, $|S| \le \lceil \frac{2n+3}{5}\rceil$ if $n$ is odd and $|S| \le \lceil \frac{2n}{5}\rceil+1$ if $n$ is even.\\

\textit{Proof of Claim 2.} We will show the existence of a resolving set $S$ of $C(C_n, f_0)$ of cardinalities given by the bounds. We consider two cases.

\vspace{.1in}

\emph{Case 1. $n \equiv 0,2,4  \pmod 5$:} If $n \equiv 0 \pmod 5$, then $S=\{u_{5i+2}, u_{5i+5} \mid 0 \le i \le \frac{n-5}{5}\} \cup \{v_2\}$ is a resolving set of $C(C_n, f_0)$; if $n \equiv 2 \pmod 5$, then $S=\{u_{5i+2}, u_{5i+5} \mid 0 \le i \le \frac{n-7}{5}\} \cup \{u_n\} \cup \{v_2\}$ is a resolving set of $C(C_n, f_0)$; if $n \equiv 4 \pmod 5$, then $S=\{u_{5i+2}, u_{5i+5} \mid 0 \le i \le \frac{n-9}{5}\} \cup \{u_{n-2}, u_n\} \cup \{v_2\}$ is a resolving set of $C(C_n, f_0)$. Since $N[S] \cap V(G_1)=V(G_1)$ and $|S \cap V(G_1)|  \ge  3$ for $n \ge 6$, by (b) and (c) of Lemma \ref{lemC}, $S \cap V(G_1)$ resolves all vertices in $G_1$ and no vertex in $G_1$ has the same code with a vertex in $G_2$. Further, a vertex in $S \cap V(G_1)$ and $v_2$ resolves all vertices in $G_2$. Thus, $S$ is a resolving set of $C(C_n, f_0)$ with $|S|=\lceil\frac{2n}{5}\rceil+1=\lceil\frac{2n+3}{5}\rceil$.

\vspace{.1in}

\emph{Case 2. $n \equiv 1,3 \pmod 5$:} If $n$ is odd and $n \equiv 1 \pmod 5$, then $S=\{u_2, u_{n-2}\} \cup \{u_{5i+4}, u_{5i+7} \mid 0 \le i \le \frac{n-11}{5}\} \cup \{v_{\frac{n+1}{2}}\}$ is a resolving set of $C(C_n, f_0)$ with $|S|=\frac{2n+3}{5}=\lceil\frac{2n+3}{5}\rceil$ (see (A) of Figure \ref{functiC11}); if $n$ is odd and $n \equiv 3 \pmod 5$, then $S=\{u_2, u_{n-4}, u_{n-2}\} \cup \{u_{5i+4}, u_{5i+7} \mid 0 \le i \le \frac{n-13}{5}\} \cup \{v_{\frac{n+1}{2}}\}$ is a resolving set of $C(C_n, f_0)$ with $|S|=\frac{2n+4}{5}=\lceil\frac{2n+3}{5}\rceil$. If $n$ is even and $n \equiv 1 \pmod 5$, then $S=\{u_2, u_{n-2}\} \cup \{u_{5i+4}, u_{5i+7} \mid 0 \le i \le \frac{n-11}{5}\} \cup \{v_{\frac{n}{2}}, v_{\frac{n}{2}+1}\}$ is a resolving set of $C(C_n, f_0)$ with $|S|=\frac{2n+3}{5}+1= \frac{2n+8}{5}=\lceil\frac{2n}{5}\rceil+1$ (see (B) of Figure \ref{functiC11}); if $n$ is even and $n \equiv 3 \pmod 5$, then $S=\{u_2, u_{n-4}, u_{n-2}\} \cup \{u_{5i+4}, u_{5i+7} \mid 0 \le i \le \frac{n-13}{5}\} \cup \{v_{\frac{n}{2}}, v_{\frac{n}{2}+1}\}$ is a resolving set of $C(C_n, f_0)$ with $|S|=\frac{2n+4}{5}+1= \frac{2n+9}{5}=\lceil\frac{2n}{5}\rceil+1$. For each case, noting that $\{u_n\}=V(G_1) - N[S]$, by (b) and (c) of Lemma \ref{lemC}, $S_1=S \cap V(G_1)$ resolves all vertices but $u_n$ in $G_1$ and $code_{S_1}(u_n)=code_{S_1}(v_2)=code_{S_1}(v_n)$. If $n$ is odd, a vertex in $S \cap V(G_1)$ and $v_{\frac{n+1}{2}}$ resolve all vertices in $G_2$; further, noting that $d(u_n, v_{\frac{n+1}{2}})=1+d(v_1, v_{\frac{n+1}{2}})$, we have $d(u_n, v_{\frac{n+1}{2}})> \max\{d(v_2, v_{\frac{n+1}{2}}), d(v_n, v_{\frac{n+1}{2}})\}$, and hence $code_S(u_n) \neq code_S(v_2)$ and $code_S(u_n) \neq code_S(v_n)$. If $n$ is even, a vertex in $S \cap V(G_1)$ and $\{v_{\frac{n}{2}}, v_{\frac{n}{2}+1}\}$ resolve all vertices in $G_2$; further, noting that $d(u_n, v_{\frac{n}{2}})=1+d(v_1, v_{\frac{n}{2}})> d(v_2, v_{\frac{n}{2}})$ and $d(u_n, v_{\frac{n}{2}+1})=1+d(v_1, v_{\frac{n}{2}+1})> d(v_n, v_{\frac{n}{2}+1})$, we have $code_S(u_n) \neq code_S(v_2)$ and $code_S(u_n) \neq code_S(v_n)$. Thus, $S$ is a resolving set of $C(C_n, f_0)$.

\begin{figure}[htpb]
\begin{center}
\scalebox{0.43}{\input{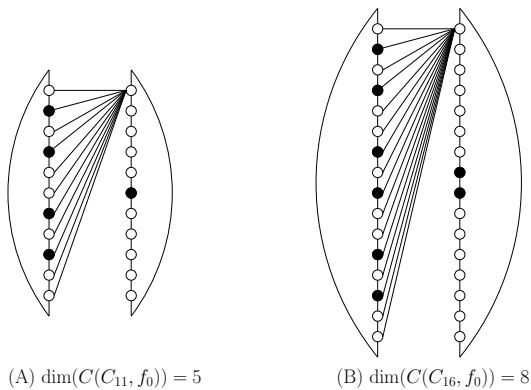}} \caption{The metric dimension of $C(C_{11}, f_0)$ and $C(C_{16}, f_0)$ and their minimum resolving sets, where the black vertices in each functigraph form a minimum resolving set}\label{functiC11}
\end{center}
\end{figure}

Therefore, for $n \ge 6$, by Claim 1 and Claim 2, we have $\dim(C(C_n,f_0))= \lceil \frac{2n+3}{5}\rceil$ if $n$ is odd and $\dim(C(C_n, f_0))= \lceil \frac{2n}{5}\rceil+1$ if $n$ is even.\hfill
\end{proof}

\begin{Thm}\label{functiCbetween}
For $n \ge 3$, let $G=C_n$. Let $f: V(G_1) \rightarrow V(G_2)$ be any function with $|f(V(G_1))|=s$, where $1 < s < n$. Then $2 \le \dim(C(C_n, f)) \le 2(n-1)-s$. 
\end{Thm}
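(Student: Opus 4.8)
The lower bound is immediate. Since $s>1$ forces $n\ge 3$, the cycle $C_n$ is a connected graph of order $n\ge 3$, so Theorem~\ref{functibounds} already gives $\dim(C(C_n,f))\ge 2$. The whole content is therefore the upper bound $\dim(C(C_n,f))\le 2(n-1)-s=2n-2-s$, which I would establish by exhibiting an explicit resolving set of that size, patterned on the complete-graph case. Writing $I=f(V(G_1))$ for the image (so $|I|=s$) and $J=V(G_2)\setminus I$ for the set of non-image vertices (so $|J|=n-s\ge 1$), the plan is to take
$$S=\big(V(G_1)\setminus\{u_a\}\big)\ \cup\ \big(J\setminus\{v_b\}\big)$$
for a carefully chosen $u_a\in V(G_1)$ and non-image vertex $v_b\in J$. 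This set has cardinality $(n-1)+(n-s-1)=2n-2-s$, and its complement is exactly $\{u_a\}\cup I\cup\{v_b\}$, a collection of $s+2$ vertices (when $s=n-1$ the factor $J\setminus\{v_b\}$ is empty and $S=V(G_1)\setminus\{u_a\}$).

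Because every vertex of $S$ carries a $0$ in its own coordinate, all vertices of $S$ are automatically resolved from everything else, so it suffices to separate the $s+2$ complementary vertices pairwise. The heart of the argument is a pinning observation for image vertices: if $v_p=f(u_\ell)\in I$ has a preimage $u_\ell\in S$, then $d(v_p,u_\ell)=1$, whereas every \emph{other} vertex of $G_2$ is at distance $\ge 2$ from $u_\ell$, since the only $G_2$-neighbor of $u_\ell$ is $f(u_\ell)=v_p$. Thus the coordinate at $u_\ell$ singles $v_p$ out among all $G_2$ vertices. As $s<n$, some image vertex has at least two preimages; choosing $u_a$ to be a preimage of such a vertex guarantees that no image vertex is orphaned, so every $v_p\in I$ retains a preimage in $S$ and is thereby distinguished from all other $G_2$ vertices, in particular from the other image vertices and from $v_b$. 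This reduces the entire problem to separating the single $G_1$-vertex $u_a$ from the $G_2$-vertices in $I\cup\{v_b\}$.

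To separate $u_a$ from $G_2$, I would use its two cycle-neighbors $u_{a-1},u_{a+1}\in S$: one has $d(u_a,u_{a-1})=d(u_a,u_{a+1})=1$, while a $G_2$ vertex $w$ satisfies $d(w,u_{a-1})=1$ only if $w=f(u_{a-1})$ and $d(w,u_{a+1})=1$ only if $w=f(u_{a+1})$. Hence these two coordinates already separate $u_a$ from every $G_2$ vertex except possibly one exceptional vertex $w^\ast$, arising when $f(u_{a-1})=f(u_{a+1})=w^\ast$. The plan is first to choose $u_a$ (subject to the non-orphan condition) so that $f(u_{a-1})\ne f(u_{a+1})$, which removes $w^\ast$ entirely and finishes the proof; one checks this is always achievable unless $f(u_{i-1})=f(u_{i+1})$ for every valid $i$, i.e.\ in highly symmetric configurations such as the parity-alternating maps on an even cycle with $s=2$. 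In those residual cases I would separate the lone pair $(u_a,w^\ast)$ by an auxiliary coordinate: either a non-image vertex $v_r\in J\cap S$ adjacent to $w^\ast$ in $G_2$ (giving $d(w^\ast,v_r)=1$ but $d(u_a,v_r)\ge 2$), with $v_b$ chosen to keep such a $v_r$ available, or a distant $G_1$-coordinate, exploiting that $u_a$ attains $G_1$-distances up to roughly $n/2$ while a high-multiplicity image vertex like $w^\ast$ reaches all of $G_1$ at distance $\le 2$.

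I expect this last step to be the main obstacle. The pinning observation and the generic separation of $u_a$ via $u_{a\pm1}$ are clean, but the exceptional pair $(u_a,w^\ast)$ in the symmetric cases forces one to reason about distances in $C(C_n,f)$ that shortcut through bridge edges rather than staying within a single copy, so the distance computations and the simultaneous good choice of $u_a$ and $v_b$ require care. The very small orders (notably $n\le 5$, where necessarily $s=2$) are cheapest to dispatch by direct inspection.
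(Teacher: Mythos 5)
Your proposal is correct and takes essentially the same route as the paper: the same resolving set shape $\bigl(V(G_1)\setminus\{u_a\}\bigr)\cup\bigl(J\setminus\{v_b\}\bigr)$ of size $2(n-1)-s$, and the same dichotomy between the generic case (some vertex whose two cycle-neighbors have distinct images) and the exceptional alternating case, which are precisely the paper's Subcases 2.1 and 2.2. The two claims you defer do check out: a short propagation argument (if $i$ is ``valid'' then $f(u_{i-1})=f(u_{i+1})$ makes $i\pm 1$ valid, and so on around the cycle) shows the exceptional configurations are exactly the parity-alternating maps on an even cycle with $s=2$, and there your first fix works, since $w^{\ast}$ has at most one image vertex among its two $G_2$-neighbors, so a non-image neighbor $v_r$ exists and can be kept in $S$ by choosing $v_b\neq v_r$ (possible as $|J|=n-2\ge 2$); your non-orphan condition on $u_a$ even spares you the paper's careful choice of the omitted non-image vertex $v'$, whose role in the paper is to handle the case where $f(u_1)$ has no preimage left in $S$.
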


\begin{proof}
By Theorem \ref{functibounds}, $\dim(C(C_n, f)) \ge 2$. Let $W=f(V(G_1))$. We will show that $\dim(C(C_n,f)) \le 2(n-1)-|W|$. We consider two cases.\\

\emph{Case 1. $|W|=n-1$:} Without loss of generality, we may assume that $|f^{-1}(v_1)|=2$, say $f^{-1}(v_1)=\{u_1, u_x\}$, by relabeling if necessary. Let $v_y \in V(G_2) - W$. We will show that $S=\{u_2, u_3, \ldots, u_n\}$ is a resolving set of $C(C_n, f)$. Note that (i) $u_1$ has 1 exactly in the 1st and in the $(n-1)$th entries of its code; (ii) $f(u_1)$ has 1 exactly in the $(x-1)$-th entry of its code for some $x$ ($2 \le x \le n$) such that $u_x \in f^{-1}(v_1)$; (iii) for $2 \le i \le n$, each $f(u_i)$ has 1 exactly in the $(i-1)$th entry of its code; (iv) $v_y$ does not contain 1 in any entry of its code. Since $S$ is a resolving set of $C(C_n, f)$ with $|S|=n-1$, $\dim(C(C_n,f)) \le n-1=2(n-1)-|W|$.\\

\emph{Case 2. $2 \le |W| \le n-2$:} In this case, $n \ge 4$. For $C(C_4, f)$ with $|f(V(G_1))|=2$, there are six non-isomorphic graphs (see Figure \ref{FC4}); one can readily check that $\dim(C(C_4, f ))\le 4$ for each case. 
\begin{figure}[htpb]
\begin{center}
\scalebox{0.41}{\input{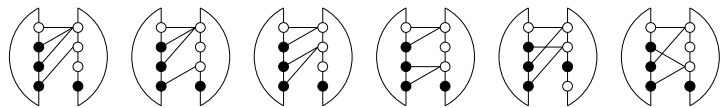}} \caption{Six non-isomorphic $C(C_4, f)$ with $|f(V(G_1))|=2$, where the black vertices in each functigraph form a resolving set}\label{FC4}
\end{center}
\end{figure}

So, let $n \ge 5$, and we consider two subcases. 

\vspace{.1in}

\emph{Subcase 2.1. There exists a vertex, say $u_1$, in $G_1$ with $N(u_1) \cap V(G_1)=\{u_2, u_n\}$ such that $f(u_2) \neq f(u_n)$:} Let $S_2=V(G_2) - (W \cup \{v'\})$ for some $v' \in V(G_2) - W$; we note here that $S_2 \neq \emptyset$. Since $|V(G_2) - W| \ge 2$ and $n \ge 5$, there exists a vertex, say $v' \not\in W$, in $G_2$ such that $N(v')\cap V(G_2) \neq \{f(u_2), f(u_n)\}$. We will show that $S=S_1 \cup  S_2$ is a resolving set of $C(C_n,f)$, where $S_1=\{u_2, u_3, \cdots, u_n\}$ and $S_2=\{v_1, v_2, \ldots, v_n\} - (W \cup \{v'\})$. Note that, for the 1st through the $(n-1)$th entries of its code, (i) $u_1$ has 1 exactly in the 1st and in the $(n-1)$th entries of its code; (ii) if $|f^{-1}(f(u_1))|=1$, then $f(u_1)$ does not contain 1 but has 2 in the 1st and in the $(n-1)$th entries of its code; if $|f^{-1}(f(u_1))| \ge 2$, then see (iii); (iii) for $2 \le i \le n$, each $f(u_i)$ has 1 in the $(i-1)$th entry of its code, but no $f(u_i)$ has 1 in the 1st and in the $(n-1)$th entries of its code at the same time; further, there's exactly one vertex in $W \subset V(G_2)$ with 1 in the $(i-1)$th entry of its code; (iv) $v' \in V(G_2)$ does not contain 1 in the 1st through $(n-1)$th entries of its code, and $v'$ does not have 2 in the 1st and in the $(n-1)$th entries of its code at the same time either. Since $S$ is a resolving set of $C(C_n, f)$ with $|S|=(n-1)+(n-s-1)=2(n-1)-s$, $\dim(C(C_n,f)) \le 2(n-1)-s$.

\vspace{.1in}

\emph{Subcase 2.2. For each vertex, say $u$, in $G_1$, two vertices in $N(u) \cap V(G_1)$ are mapped to the same vertex in $G_2$:} Notice that $|W|=1$ if $C_n$ is an odd cycle, and $|W|=2$ if $C_n$ is an even cycle. Since $|W| \ge 2$, $C_n$ must be an even cycle and $f(u_1)=f(u_3)=\cdots=f(u_{n-1})=v'$ and $f(u_2)=f(u_4)=\cdots=f(u_n)=v''$ for $v' \neq v''$. In this case, $S=S_1 \cup  S_2$ is a resolving set of $C(C_n, f)$, where $S_1 =\{u_3, u_4, \ldots, u_n\}$ and $S_2=\{v_1, v_2, \ldots, v_n\} - \{v', v''\}$. Note that, for the 1st through the $(n-2)$th entries of its code, (i) $u_1$ has 1 only in the $(n-2)$th entry; (ii) $u_2$ has 1 only in the $1$st entry; (iii) $v'$ has 1 in the $\ell_1$-th entry, where $1 \le \ell_1 \le n-2$ and $\ell_1$ is odd; (iv) $v''$ has 1 in the $\ell_2$-th entry, where $1 \le \ell_2 \le n-2$ and $\ell_2$ is even. Since  $S$ is a resolving set of $C(C_n, f)$, $\dim(C(C_n, f)) \le 2n-4=2(n-1)-|W|$. \hfill
\end{proof}

\begin{Rem}
For the graph in Figure~\ref{functiC=}, where $n=3$ and $|f(V(G_1))|=2$, the formula for the upper bound in Theorem~\ref{functiCbetween} yields $2$, which is also the lower bound. 
Notice that $\{u_2, u_3\}$ is a minimum resolving set of $C(C_3,f)$. 
\end{Rem}

\begin{figure}[htpb]
\begin{center}
\scalebox{0.45}{\input{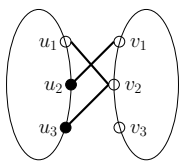}} \caption{A functigraph showing the sharpness of the bounds in Theorem~\ref{functiCbetween}} \label{functiC=}
\end{center}
\end{figure}

\textit{Acknowledgement.} The authors thank the anonymous referees for some helpful comments and suggestions.

\end{document}